\documentclass[preprint,11pt,3p]{elsarticle}
\usepackage{amsmath,amsthm,amsfonts,amssymb,arydshln}
\usepackage{xcolor}
\usepackage{algorithm}
\usepackage{multirow}
\usepackage{mathdots}
\usepackage{multicol,graphicx}
\usepackage{mathrsfs}
\usepackage{booktabs}
\usepackage{algorithm}              
\usepackage{algorithmic}
\usepackage{comment}

\usepackage{mathrsfs}
\usepackage{sectsty}
\definecolor{astral}{RGB}{46,116,181}
\sectionfont{\color{astral}}
\newtheorem{theorem}{Theorem}[section]
\newtheorem{lemma}[theorem]{Lemma}
\newtheorem{corollary}[theorem]{Corollary}
\newtheorem{proposition}[theorem]{Proposition}

\newtheorem{definition}[theorem]{Definition}
\newtheorem{example}[theorem]{Example}

\usepackage{hyperref}
\usepackage{subfigure}
\usepackage{scalerel}
\usepackage{stackengine,wasysym}

\usepackage{tikz}
\pgfdeclarelayer{bg} 
\pgfsetlayers{bg,main}

\usepackage{tikz,xcolor}
\definecolor{lime}{HTML}{A6CE39}
\definecolor{lightblue}{rgb}{0.0, 0.0, 0.5}
\DeclareRobustCommand{\orcidicon}{
	\begin{tikzpicture}
	\draw[lime, fill=lime] (0,0)
	circle [radius=0.16]
	node[white] {{\fontfamily{qag}\selectfont \tiny ID}};
	\draw[white, fill=white] (-0.0625,0.095)
	circle [radius=0.007];
	\end{tikzpicture}
	\hspace{-2mm}
}
\foreach \x in {A, ..., Z}{
	\expandafter\xdef\csname orcid\x\endcsname{\noexpand\href{https://orcid.org/\csname orcidauthor\x\endcsname}{\noexpand\orcidicon}}
}

\newcommand{\Ind}{{\mathrm {Ind}}}

\begin{document}
\begin{frontmatter}
\title{
Perturbation Analysis of the QT-Drazin Inverse of Quaternion Tensors via the QT-Product
}
\author[a]{Yue Zhao}
\ead{yuezhao0303@163.com}

\author[b]{Daochang Zhang\corref{cor1}}
\ead{daochangzhang@126.com}
\cortext[cor1]{Corresponding author.}

\author[b]{Jingqian Li}
\ead{JingqianLi85@163.com}

\author[c]{Dijana Mosi\'{c}}
\ead{dijana@pmf.ni.ac.rs}

\address[a]{Department of Math \& Stat, Georgia State University, Atlanta, GA 30303, USA.}
\address[b]{College of Sciences, Northeast Electric Power University, Jilin, P.R. China.}
\address[c]{Faculty of Sciences and Mathematics, University of Ni\v s, P.O.
Box 224, 18000 Ni\v s, Serbia.}

\begin{abstract}
The motivation of this paper is to investigate the perturbation theory for the QT-Drazin inverse of quaternion tensors under the QT-product via the associated $z$-block circulant representation. A fundamental relationship between the QT-Drazin inverse of $\mathtt{bcirc}_z(\mathcal A)$ and the $z$-block circulant form of $\mathcal A^D$ is established. Moreover, the QT-index of a quaternion tensor is characterized by the indices of the diagonal blocks in the corresponding block-diagonalized matrix.  As a consequence, a representation of the QT-Drazin inverse in terms of the QT-Moore--Penrose inverse is derived, which offers a practical approach for its direct computation in MATLAB. Furthermore, a decomposition theory for the QT-Drazin inverse is developed by combining the structure of $z$-block circulant matrices with the Jordan decomposition of quaternion matrices. Numerical examples are provided to demonstrate the theoretical results and computational feasibility.
\end{abstract}

\begin{keyword}
Quaternion Tensors\sep QT-Product\sep $z$-Block Circulant Matrices\sep QT-Index\sep Perturbation  

\MSC[2010] 15A09; 15A69; 15A72; 65F35
\end{keyword}

\end{frontmatter}

\section{Introduction}
Tensors, as higher-order generalizations of vectors and matrices, are widely used in signal processing, computer vision, and graph analysis~\cite{SIAMMKB2013, SIAMSMS2013, WDPRESS2016, MFBC2023}. An order-\(m\) tensor can be viewed as a multidimensional array of the form
\[
\mathcal{A} = \left( a_{i_1 i_2 \cdots i_m} \right) \in \mathbb{R}^{n_1 \times n_2 \times \cdots \times n_m}.
\]
In response to the growing interest in multidimensional data analysis, several tensor products and associated decompositions have been proposed, including the Einstein product~\cite{AE2007}, T-product~\cite{SIAMMKB2013}, C-product~\cite{LAAKLA2015}, and M-product~\cite{LAAKLA2015}, etc.

The T-product enables the singular value decomposition (T-SVD) of third-order tensors but is limited in scope, as it does not naturally extend to higher-order data structures such as color videos, which are typically modeled as fourth-order tensors. However, a color video can alternatively be represented as a third-order quaternion tensor. To generalize the T-product to this setting, the QT-product was introduced in 2022~\cite{AMLzhang2022}, leading to the development of a corresponding SVD framework for third-order quaternion tensors. Moreover, the T-product between two complex tensors can be equivalently represented as the QT-product between two quaternion tensors.

In particular, tensor generalized inverses play a fundamental role in numerical multilinear algebra and have been applied in many areas of science and engineering~\cite{SIAMRevKB2009,CMALZ2019}. In 2017, Jin et al.~\cite{CMAJin2017} introduced the Moore–Penrose inverse of tensors based on the T-product. Subsequently, the Drazin inverse was investigated by Miao et al.~\cite{CAMmqw2021}. Extensive research has focused on generalized tensor inverses and their perturbation theory under various tensor product frameworks. 

Accordingly, this paper aims to advance the perturbation theory of the QT-Drazin inverse and investigate its applications within the framework of the QT-product.

The paper is organized as follows. Section 2 recalls several preliminaries and lemmas on quaternion tensors, the QT-product, and the $z$-block circulant representation. Section 3 examines the relationship between the QT-Drazin inverse of $\mathtt{bcirc}_{z}(\mathcal A)$ and the $z$-block circulant representation of $\mathcal A^{D}$. It is worth noting that we establish a characterization of the QT-index through the indices of the diagonal blocks in the associated block-diagonalized matrix. On this basis, we derive a representation of the QT-Drazin inverse via the QT-Moore--Penrose inverse, thereby providing a practical approach for its direct computation in MATLAB, and further develop a decomposition of the QT-Drazin inverse of the quaternion tensor $\mathcal A$. Section 4 is devoted to perturbation analysis for the QT-Drazin inverse under different conditions. Section 5 concludes the paper with a numerical example illustrating the theoretical results.

\section{Preliminaries and key lemmas}
In this section, the principal definitions and notations required for the later development are introduced.

Let $\mathbb{R}$ and $\mathbb{C}$ denote the fields of real and complex numbers, respectively. 
The set of quaternions, denoted by $\mathbb{Q}$, constitutes a four-dimensional vector space 
over $\mathbb{R}$ with the ordered basis $\{\mathbf{1}, \mathbf{i}, \mathbf{j}, \mathbf{k}\}$.

Each $a \in \mathbb{Q}$ admits the representation
\[
a = a_0 + a_1 \mathbf{i} + a_2 \mathbf{j} + a_3 \mathbf{k},
\]
where $a_0, a_1, a_2, a_3 \in \mathbb{R}$.
The imaginary units satisfy the standard multiplication rules
\[
\mathbf{i}^2 = \mathbf{j}^2 = \mathbf{k}^2 = -1, \qquad
\mathbf{ij} = \mathbf{k}, \quad
\mathbf{jk} = \mathbf{i}, \quad
\mathbf{ki} = \mathbf{j},
\]
and they are noncommutative:
$
\mathbf{ji} = -\mathbf{k}, \quad
\mathbf{kj} = -\mathbf{i}, \quad
\mathbf{ik} = -\mathbf{j}.
$ The conjugate of $a$ is defined by
$
a^* = a_0 - a_1 \mathbf{i} - a_2 \mathbf{j} - a_3 \mathbf{k},
$
and the norm of $a$ is
$
|a| = \sqrt{a^* a} = \sqrt{a_0^2 + a_1^2 + a_2^2 + a_3^2}.
$
Equivalently, any quaternion $a$ can be regarded as an element of $\mathbb{C}^2$ and can be written uniquely as
\[
a = c_1 + \mathbf{j} c_2,
\]
where $ c_1=a_0 + a_1 \mathbf{i},$ $c_2=a_2 - a_3 \mathbf{i},$ such that $c_1,~c_2\in \mathbb{C}.$

Similar to a single quaternion, any quaternion matrix 
$Q \in \mathbb{Q}^{m\times n}$ can be represented as
\[
Q = Q_0+Q_1\mathbf{i}+Q_2\mathbf{j}+Q_3 \mathbf{k}=Q_{\mathbf{d}} + \mathbf{j} Q_{\mathbf{c}},
\]
where $Q_0,~Q_1,~Q_2,~Q_3\in\mathbb R^{m\times n}$, $Q_{\mathbf{d}}=Q_0+Q_1\mathbf{i}\in \mathbb{C}^{m\times n}$ and $Q_{\mathbf{c}}=Q_2-Q_3\mathbf{i}\in \mathbb{C}^{m\times n}$ . 
The conjugate transpose of $\mathcal A$ is $Q^*=Q_0^*-Q_1^*\mathbf{i} -Q_2^* \mathbf{j} - Q_3^* \mathbf{k}$.

Following~\cite{LAAzfzl1997}, we introduce the eigenvalues of quaternion matrices, which, owing to the noncommutativity of quaternion multiplication, are classified into left and right types. In this work, only right eigenvalues are considered.
\begin{definition}
A scalar \(\lambda \in \mathbb{Q}\) is called a \emph{right} (respectively, \emph{left}) eigenvalue of a quaternion matrix 
\(A \in \mathbb{Q}^{n\times n}\) if there exists a nonzero vector \(x \in \mathbb{Q}^n\) satisfying 
\(A x = x \lambda\) (respectively, $A x = \lambda x$); in this case, $x$ is referred to as the corresponding right (respectively, left) eigenvector.
\end{definition}

In what follows, we introduce the definition of a third-order quaternion tensor 
and describe the associated $z$-block circulant matrix structure, following the discussion in~\cite{DY2026}. Before proceeding, we clarify several notational conventions. Bold lowercase letters denote vectors. A third-order tensor $\mathcal{A}$ can be regarded, respectively,  as a collection of three families of matrix slices: frontal, lateral, and horizontal. In particular, $\mathcal{A}^{(i)}$ represents the $i$-th frontal slice of $\mathcal{A}$, where $1 \le i \le n_3$.

\begin{definition}
 Let $\mathcal A\in \mathbb Q^{n_1\times n_2\times n_3}$, $\mathcal{A}_{\mathbf{d}}\in \mathbb C^{n_1\times n_2\times n_3}$, and $\mathcal{A}_{\mathbf{c}}\in \mathbb C^{n_1\times n_2\times n_3}$ satisfy that $$\mathcal A=\mathcal{A}_{\mathbf{d}} + \mathbf{j} \mathcal{A}_{\mathbf{c}}.$$ 
    Then the z-block circulant matrix associated with $\mathcal A$, denoted by $\mathtt{bcirc_z}(\mathcal A)$, is defined as:
    \[
    \mathtt{bcirc_z}(\mathcal A)=\mathtt{bcirc}(\mathcal{A}_{\mathbf{d}})+\mathbf{j}\mathtt{bcirc}(\mathcal{A}_{\mathbf{c}})(P_{n_3}\otimes I_{n_2}),
    \]
    where $\otimes$ denotes the Kronecker Product and $P_{n_3}$ is the $n_3\times n_3$ permutation matrix given by:
\begin{align}\label{PN3}
P_{n_3} =
\begin{bmatrix}
1&0&\cdots&0&0\\0&0&\cdots&0&1\\0&0&\cdots&1&0\\\vdots&\vdots&\iddots&\vdots&\vdots\\0&1&\cdots&0&0
\end{bmatrix}.
\end{align}
Moreover, the explicit structure of the $z$-block circulant matrix 
$\mathtt{bcirc_z}(\mathcal{A})\in\mathbb{Q}^{n_1n_3\times n_2n_3}$ is obtained as follows:
    \begin{align*}\label{bcirczA}
        \mathtt{bcirc_z}(\mathcal A)=
        \begin{bmatrix}
        \mathcal{A}_{\mathbf{d}}^{(1)} + \mathbf{j}\mathcal{A}_{\mathbf{c}}^{(1)} & \mathcal{A}_{\mathbf{d}}^{(n_3)} +  \mathbf{j}\mathcal{A}_{\mathbf{c}}^{(2)} & \cdots & \mathcal{A}_{\mathbf{d}}^{(2)} + \mathbf{j}\mathcal{A}_{\mathbf{c}}^{(n_3)} \\
        \mathcal{A}_{\mathbf{d}}^{(2)} + \mathbf{j}\mathcal{A}_{\mathbf{c}}^{(2)} & \mathcal{A}_{\mathbf{d}}^{(1)} + \mathbf{j}\mathcal{A}_{\mathbf{c}}^{(3)} & \cdots & \mathcal{A}_{\mathbf{d}}^{(3)} + \mathbf{j}\mathcal{A}_{\mathbf{c}}^{(1)} \\
        \mathcal{A}_{\mathbf{d}}^{(3)} + \mathbf{j}\mathcal{A}_{\mathbf{c}}^{(3)} & \mathcal{A}_{\mathbf{d}}^{(2)} + \mathbf{j}\mathcal{A}_{\mathbf{c}}^{(4)} & \cdots & \mathcal{A}_{\mathbf{d}}^{(4)} + \mathbf{j}\mathcal{A}_{\mathbf{c}}^{(4)} \\
        \vdots & \vdots & \ddots & \vdots \\
        \mathcal{A}_{\mathbf{d}}^{(n_3)} + \mathbf{j}\mathcal{A}_{\mathbf{c}}^{(n_3)} & \mathcal{A}_{\mathbf{d}}^{(n_3-1)} + \mathbf{j}\mathcal{A}_{\mathbf{c}}^{(1)} & \cdots & \mathcal{A}_{\mathbf{d}}^{(1)} + \mathbf{j}\mathcal{A}_{\mathbf{c}}^{(n_3-1)}
        \end{bmatrix},
    \end{align*} where $\mathcal{A}_{\mathbf{d}}^{(i)}+\mathbf{j}\mathcal{A}_{\mathbf{c}}^{(i)}\in\mathbb{Q}^{n_1\times n_2}$ is the $i$-th frontal slice of $\mathcal{A}$.
\end{definition}

As defined in~\cite{AMLzhang2022}, the block circulant matrix corresponding to a third-order quaternion tensor
$\mathcal{A}$, denoted by $\mathtt{bcirc}(\mathcal{A}) \in 
\mathbb{Q}^{n_1 n_3 \times n_2 n_3}$, takes the following form:
\begin{align*}
    \mathtt{bcirc}(\mathcal{A}) :=
\begin{bmatrix}
\mathcal A^{(1)} &\mathcal A^{(n_3)} & \mathcal A^{(n_3-1)} & \cdots & \mathcal A^{(2)} \\
\mathcal A^{(2)} & \mathcal A^{(1)} & \mathcal A^{(n_3)} & \cdots & \mathcal A^{(3)} \\
\vdots & \vdots & \vdots & \ddots & \vdots \\
\mathcal A^{(n_3)} & \mathcal A^{(n_3-1)} & \mathcal A^{(n_3-2)} & \cdots & \mathcal A^{(1)}
\end{bmatrix}.
\end{align*}
Furthermore, the operation $\mathtt{unfold}$ transforms a tensor 
$\mathcal{A} \in \mathbb{Q}^{n_1 \times n_2 \times n_3}$ 
into a block column vector of size $n_1 n_3 \times n_2$, 
obtained from the first block column of $\mathtt{bcirc}(\mathcal{A})$. 
The inverse transformation, denoted by $\mathtt{fold}$, 
recovers the tensor $\mathcal{A}$ from its unfolded representation. The two operations are formally defined as 
$\mathtt{unfold}(\mathcal{A}) := [\,\mathcal{A}^{(1)};\,\mathcal{A}^{(2)};\,\cdots;\,\mathcal{A}^{(n_3)}\,]$ 
and $\mathtt{fold}(\mathtt{unfold}(\mathcal{A})) := \mathcal{A}$, 
where each $\mathcal{A}^{(i)} \in \mathbb{Q}^{n_1 \times n_2}$.

It is a classical result that the normalized discrete Fourier transform (DFT) matrix 
can diagonalize any complex circulant matrix~\cite{JHUPgl1996}. 
Therefore, for any tensor $\mathcal{A} \in \mathbb{C}^{n_1 \times n_2 \times n_3}$, we obtain 
\begin{align*}
    (F_{n_3}\otimes I_{n_1})\mathtt{bcirc}(\mathcal A)(F_{n_3}^*\otimes I_{n_2})=\begin{bmatrix}
        \hat{\mathcal A}^{(1)}&&&\\&\hat{\mathcal A}^{(2)}&&\\&&\ddots&\\&&&\hat{\mathcal A}^{(n_3)}
\end{bmatrix}:=diag(\hat{\mathcal A}),
\end{align*}
where $\hat{\mathcal{A}}^{(s)}$ ($s = 1, 2, \ldots, n_3$) are the frontal slices of 
$\hat{\mathcal{A}}$, and $F_{n_3} \in \mathbb{C}^{n_3 \times n_3}$ denotes the normalized 
discrete Fourier transform (DFT) matrix whose entries are defined by 
\[
[F_{n_3}]_{ij} = n_3^{-1/2} \omega^{(i-1)(j-1)}, \qquad 1 \le i, j \le n_3,
\]
with $\omega = e^{-2\pi \mathbf{i}/n_3}$. 

Additionally, several basic definitions that will be used throughout the subsequent sections 
are introduced below. In particular, the definition of the QT-product is given as follows:
\begin{definition}\cite{AMLzhang2022}\label{defAB}
Let $\mathcal{A}_{\mathbf{d}},~\mathcal{A}_{\mathbf{c}} \in \mathbb{C}^{n_1 \times r \times n_3}$, $\mathcal{A} = \mathcal{A}_{\mathbf{d}} + \mathbf{j}\mathcal{A}_{\mathbf{c}} 
\in \mathbb{Q}^{n_1 \times r \times n_3},$ and 
$\mathcal{B} \in \mathbb{Q}^{r \times n_2 \times n_3}$. The symbol $\otimes$ denotes the Kronecker product. Then
\begin{align*}
    \mathcal A*_Q\mathcal B\stackrel{.}{=}\mathtt{fold}\Big(\big(\mathtt{bcirc}(\mathcal A_{\mathbf{d}})+\mathbf j\mathtt{bcirc}(\mathcal A_{\mathbf{c}})\cdot (P_{n_3}\otimes I_r)\big)\cdot\mathtt{unfold}(\mathcal B)\Big)\in \mathbb{Q}^{n_1 \times n_2 \times n_3},
    \end{align*} where $P_{n_3}$ is given by \eqref{PN3}.
\end{definition}

We next present the definitions of the identity quaternion tensor, the conjugate transpose of a quaternion tensor, and the f-diagonal quaternion tensor.
\begin{definition}\cite{AMLzhang2022}
The $n \times n \times l$ identity quaternion tensor $\mathcal{I}_{nnl}$ is the tensor whose first frontal slice is the identity matrix, and whose remaining frontal slices are zero matrices.
\end{definition}

\begin{definition}\cite{AMLzhang2022}
The conjugate transpose of a quaternion tensor 
$\mathcal{A} = \mathcal A_{\mathbf{d}} + \mathbf{j}\mathcal A_{\mathbf{c}} \in \mathbb{Q}^{n_1 \times n_2 \times n_3}$ 
is also denoted as $\mathcal{A}^* \in \mathbb{Q}^{n_2 \times n_1 \times n_3}$, satisfying
\[
\mathtt{unfold}(\mathcal{A}^*) = \mathtt{unfold}(A_{\mathbf{d}}^*) - (P_{n_3} \otimes I_{n_2}) \mathtt{unfold}(A_{\mathbf{c}}^*)\mathbf j.
\]
\end{definition}

\begin{definition}\cite{AMLzhang2022}
The tensor \(\mathcal{P}_{n n n_3} \in \mathbb{Q}^{n \times n \times n_3}\) is called an \emph{f-diagonal} quaternion tensor if each of its frontal slices is a diagonal quaternion matrix.
\end{definition}

\begin{definition}\cite{LiuAMC2026}
The tensor \(\mathcal{P} \in \mathbb{Q}^{n \times n \times n_3}\) is called an \emph{f-upper(lower)-triangular} quaternion tensor if each of its frontal slices is an upper(lower)-triangular quaternion matrix.
\end{definition} 

Subsequently, we propose the following definitions using the $z$-block circulant matrix.
\begin{definition}\label{QTSPECTRAL}
Let \(\mathcal{A} \in \mathbb{Q}^{n_1 \times n_2 \times n_3}\). The QT-spectral norm of \(\mathcal{A}\) is defined by  
\[
\|\mathcal{A}\|_s:= \|\mathtt{bcirc_z}(\mathcal{A})\|_2,
\]  
where \(\|\mathtt{bcirc_z}(\mathcal{A})\|_2\) denotes the \emph{spectral norm} of the $z$-block circulant quaternion matrix \(\mathtt{bcirc_z}(\mathcal{A})\).
\end{definition}

\begin{definition}\label{defrhoqt}
   The QT-spectral radius of $\mathcal A$ is defined as $$\scalebox{1.3}{$\rho$}_{QT}(\mathcal{A}):=\scalebox{1.3}{$\rho$}(\mathtt{bcirc_z}(\mathcal A)),$$ where $\scalebox{1.35}{$\rho$}_{QT}$ denotes the \emph{spectral radius} of a quaternion tensor.
\end{definition}

Based on the preceding definitions, 
we next introduce several lemmas that will be essential for proving the main results.
\begin{lemma}\cite{DY2026}\label{zdc}
Let $\mathcal{A}_{\mathbf{d}}$ and $\mathcal{A}_{\mathbf{c}}\in \mathbb C^{n_1\times r\times n_3}$ satisfy that $\mathcal A=\mathcal{A}_{\mathbf{d}} + \mathbf{j} \mathcal{A}_{\mathbf{c}}\in\mathbb{Q}^{{n_1\times r\times n_3}}.$ Denote that $\hat{\mathcal A}$ is the DFT of $\mathcal{A}$. Then, 
\begin{align*}
    (F_{n_3}\otimes I_{n_1})\mathtt{bcirc_z}(\mathcal A)(F_{n_3}^*\otimes I_{r})=diag(\hat{\mathcal A}).
\end{align*}
\end{lemma}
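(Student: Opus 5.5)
The plan is to split $\mathtt{bcirc_z}(\mathcal A)$ into its complex part and its $\mathbf j$-part, and to diagonalize each with the classical complex result $(F_{n_3}\otimes I_{n_1})\mathtt{bcirc}(\cdot)(F_{n_3}^*\otimes I_{r})=diag(\hat{\,\cdot\,})$ recalled above. By the definition of $\mathtt{bcirc_z}$ and linearity,
\begin{align*}
(F_{n_3}\otimes I_{n_1})\mathtt{bcirc_z}(\mathcal A)(F_{n_3}^*\otimes I_{r})
&=diag(\hat{\mathcal A}_{\mathbf d})\\
&\quad+(F_{n_3}\otimes I_{n_1})\,\mathbf j\,\mathtt{bcirc}(\mathcal A_{\mathbf c})(P_{n_3}\otimes I_r)(F_{n_3}^*\otimes I_r).
\end{align*}
The first term is immediate, so all the work lies in the second one.

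For the $\mathbf j$-part I will use the commutation rule $c\,\mathbf j=\mathbf j\,\bar c$, valid for every $c\in\mathbb C$. Applied entrywise, it gives $(F_{n_3}\otimes I_{n_1})\mathbf j=\mathbf j(\bar F_{n_3}\otimes I_{n_1})$. I will then check three elementary identities for the DFT matrix and the reversal permutation $P_{n_3}$ of \eqref{PN3}:
\[
\bar F_{n_3}=P_{n_3}F_{n_3}=F_{n_3}P_{n_3},\qquad P_{n_3}^2=I,\qquad P_{n_3}F_{n_3}^*=F_{n_3}^*P_{n_3}.
\]
The first follows from $\omega^{-(i-1)(j-1)}=\omega^{(n_3-i+1)(j-1)}$, which shows that conjugation reverses the row indices modulo $n_3$. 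The other two follow from the symmetry of $P_{n_3}$ and from $P_{n_3}F_{n_3}=F_{n_3}P_{n_3}$.

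With these identities the second term becomes
\[
\mathbf j\,(P_{n_3}\otimes I_{n_1})(F_{n_3}\otimes I_{n_1})\mathtt{bcirc}(\mathcal A_{\mathbf c})(F_{n_3}^*\otimes I_r)(P_{n_3}\otimes I_r)
=\mathbf j\,(P_{n_3}\otimes I_{n_1})\,diag(\hat{\mathcal A}_{\mathbf c})\,(P_{n_3}\otimes I_r).
\]
Conjugating a block diagonal matrix by the block permutation $P_{n_3}\otimes I$ keeps it block diagonal and only reorders the blocks. Its $s$-th diagonal block is $\hat{\mathcal A}_{\mathbf c}^{(\sigma(s))}$, where $\sigma(1)=1$ and $\sigma(s)=n_3-s+2$ for $s\ge2$.

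Adding the two terms yields a block diagonal matrix whose $s$-th block is $\hat{\mathcal A}_{\mathbf d}^{(s)}+\mathbf j\,\hat{\mathcal A}_{\mathbf c}^{(\sigma(s))}$. The final step is to match this with the frontal slices of $\hat{\mathcal A}$, the DFT of the quaternion tensor, in the convention of \cite{DY2026}. This is where I expect the main obstacle. Because quaternions do not commute, the DFT must be applied with $F_{n_3}$ acting from the left on $\mathtt{unfold}(\mathcal A_{\mathbf d})$, and on the $\mathbf j$-component in the form that produces exactly the index reversal $\sigma$. So I must take care that the identity $\bar F_{n_3}=P_{n_3}F_{n_3}$ is used consistently with that definition, and that no spurious conjugation of $\hat{\mathcal A}_{\mathbf c}$ enters. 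Once the slices are identified, the block diagonal matrix is $diag(\hat{\mathcal A})$, which proves the lemma.
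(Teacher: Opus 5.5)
Your argument is correct. Note that the paper does not prove this lemma; it imports it from \cite{DY2026}, so there is no in-paper argument to compare against. The identities $c\,\mathbf j=\mathbf j\,\bar c$, $\bar F_{n_3}=P_{n_3}F_{n_3}=F_{n_3}P_{n_3}$ and $P_{n_3}F_{n_3}^*=F_{n_3}^*P_{n_3}$ all hold. The $s$-th diagonal block is indeed $\hat{\mathcal A}_{\mathbf d}^{(s)}+\mathbf j\,\hat{\mathcal A}_{\mathbf c}^{(\sigma(s))}$, with $\sigma(1)=1$ and $\sigma(s)=n_3-s+2$ for $s\ge 2$.

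The step you flag as the main obstacle takes one line, and the $\mathbf j$-component needs no special treatment. Use the same convention as in the complex case recalled in the paper: the unnormalized Fourier matrix acts from the left on the whole unfolding, $\mathtt{unfold}(\hat{\mathcal A})=(\sqrt{n_3}F_{n_3}\otimes I_{n_1})\,\mathtt{unfold}(\mathcal A)$. Equivalently, $\hat{\mathcal A}^{(s)}=\sum_{k}\omega^{(s-1)(k-1)}\mathcal A^{(k)}$ with the scalar on the left. Then $\omega^{m}\mathbf j=\mathbf j\,\omega^{-m}$ gives
\[
\hat{\mathcal A}^{(s)}=\hat{\mathcal A}_{\mathbf d}^{(s)}+\mathbf j\sum_{k=1}^{n_3}\omega^{-(s-1)(k-1)}\mathcal A_{\mathbf c}^{(k)}=\hat{\mathcal A}_{\mathbf d}^{(s)}+\mathbf j\,\hat{\mathcal A}_{\mathbf c}^{(\sigma(s))},
\]
because $-(s-1)\equiv\sigma(s)-1\pmod{n_3}$. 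This is the same fact as $\bar F_{n_3}=P_{n_3}F_{n_3}$. The conjugation lands on the Fourier scalar, never on the entries of $\mathcal A_{\mathbf c}$, so no spurious conjugate appears. This completes your proof.

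It is worth recording that this convention is forced rather than chosen. Since $P_{n_3}e_1=e_1$, the first block column of $\mathtt{bcirc_z}(\mathcal A)$ is $\mathtt{unfold}(\mathcal A)$. Write $(F_{n_3}\otimes I_{n_1})\mathtt{bcirc_z}(\mathcal A)=D\,(F_{n_3}\otimes I_r)$ with $D$ block diagonal, and compare first block columns using $F_{n_3}e_1=n_3^{-1/2}(1,\dots,1)^T$. This shows $D_s=\sqrt{n_3}\,\big[(F_{n_3}\otimes I_{n_1})\mathtt{unfold}(\mathcal A)\big]_s$, which is exactly the left DFT above. If the scalars were placed on the right instead, one would get $\hat{\mathcal A}_{\mathbf d}^{(s)}+\mathbf j\,\hat{\mathcal A}_{\mathbf c}^{(s)}$. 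The lemma would then fail unless $\mathcal A_{\mathbf c}^{(k)}=\mathcal A_{\mathbf c}^{(\sigma(k))}$ for all $k$.
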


\begin{lemma}\cite{DY2026}\label{newrelation}
    Let $\mathcal A\in \mathbb Q^{n_1\times r\times n_3},$ $\mathcal B\in \mathbb Q^{r\times n_2\times n_3},$ and $\mathcal C\in \mathbb Q^{n_1\times n_2\times n_3}.$ Denote that $\hat{\mathcal A}$, $\hat{\mathcal B}$, $\hat{\mathcal C}$ are the DFT of $\mathcal A,$ $\mathcal B,$ $\mathcal C,$ respectively. Then
    \begin{align*}
       \mathcal A*_Q\mathcal B=\mathcal C\Longleftrightarrow \mathtt{bcric_z}(\mathcal A)\mathtt{bcric_z}(\mathcal B)=\mathtt{bcric_z}(\mathcal C).
    \end{align*}
    \end{lemma}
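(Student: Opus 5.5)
The plan is to reduce the statement to a block-matrix identity and verify it block by block. By Definition~\ref{defAB}, $\mathcal C=\mathcal A*_Q\mathcal B$ means
\[
\mathtt{unfold}(\mathcal C)=\mathtt{bcirc_z}(\mathcal A)\,\mathtt{unfold}(\mathcal B).
\]
The unfolding $\mathtt{unfold}(\mathcal B)$ is the first block column of $\mathtt{bcirc}(\mathcal B)$, and also the first block column of $\mathtt{bcirc_z}(\mathcal B)$. Indeed, the first block column of $P_{n_3}\otimes I_{n_2}$ is $[I;0;\dots;0]$, so multiplying by it leaves the first block column of $\mathtt{bcirc}(\mathcal B_{\mathbf c})$ unchanged.

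For the direction ($\Leftarrow$), assume $\mathtt{bcirc_z}(\mathcal A)\mathtt{bcirc_z}(\mathcal B)=\mathtt{bcirc_z}(\mathcal C)$. Comparing first block columns gives $\mathtt{bcirc_z}(\mathcal A)\,\mathtt{unfold}(\mathcal B)=\mathtt{unfold}(\mathcal C)$, which is exactly $\mathcal A*_Q\mathcal B=\mathcal C$.

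For the direction ($\Rightarrow$), I will show that $\mathtt{bcirc_z}(\mathcal A)\mathtt{bcirc_z}(\mathcal B)$ is itself the $z$-block circulant matrix of some tensor. Its first block column is then $\mathtt{unfold}(\mathcal A*_Q\mathcal B)=\mathtt{unfold}(\mathcal C)$, and this forces the product to equal $\mathtt{bcirc_z}(\mathcal C)$. To prove closure I will use Lemma~\ref{zdc}:
\[
\mathtt{bcirc_z}(\mathcal A)=(F_{n_3}^*\otimes I_{n_1})\,diag(\hat{\mathcal A})\,(F_{n_3}\otimes I_r),
\]
and similarly for $\mathcal B$. The product therefore becomes
\[
(F^*\otimes I)\,diag(\hat{\mathcal A})\,diag(\hat{\mathcal B})\,(F\otimes I).
\]
The key point is that $\mathtt{bcirc_z}$ is a bijection between tensors and the matrices conjugated by $F\otimes I$ into block-diagonal form with a fixed structure. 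That structure is the one produced by quaternion DFT slices, where the $\mathbf j$-parts pair index $s$ with $n_3+2-s$ through $P_{n_3}$.

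The main obstacle is that quaternion scalars do not commute with $F_{n_3}$. Moving $\mathbf j$ past $F$ conjugates it, since $\mathbf j F=\bar F\mathbf j$ and $\bar F=P_{n_3}F$. Because of this, the block-diagonal form in Lemma~\ref{zdc} must be read carefully: the set of matrices $\mathtt{bcirc_z}(\cdot)$ is closed under multiplication only because of how $P_{n_3}$ intertwines with $F$.

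To avoid that subtlety, I would instead verify closure directly in the complex decomposition. Write
\[
\mathtt{bcirc_z}(\mathcal A)=A_d+\mathbf j A_cP,
\]
where $A_d=\mathtt{bcirc}(\mathcal A_{\mathbf d})$, $A_c=\mathtt{bcirc}(\mathcal A_{\mathbf c})$ and $P=P_{n_3}\otimes I$, with the analogous notation for $\mathcal B$. Using $X\mathbf j=\mathbf j\bar X$ for complex $X$, the product expands to
\[
(A_dB_d-\overline{A_c}\,P\,\overline{B_c}\,P\cdots)+\mathbf j(\overline{A_d}B_c+A_c\,P\,B_d\,P)P,
\]
with the bookkeeping to be done carefully. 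I would then use three facts:
\begin{itemize}
\item complex block circulants form an algebra;
\item $P\,\mathtt{bcirc}(\mathcal X)\,P=\mathtt{bcirc}(\mathcal X')$ for the index-reversed tensor $\mathcal X'$;
\item the conjugate of a block circulant is again a block circulant.
\end{itemize}
Together these show that the $\mathbf d$-part is a complex block circulant and the $\mathbf c$-part is a block circulant times $P$. This exhibits the product in the form $\mathtt{bcirc_z}(\mathcal D)$ for some tensor $\mathcal D$, and the first-column argument gives $\mathcal D=\mathcal C$.
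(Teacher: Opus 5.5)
The paper gives no proof of this lemma; it is quoted from \cite{DY2026}. So there is no internal argument to compare with, and judged on its own your proof is correct and complete.

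The first-block-column observation holds because $P_{n_3}e_1=e_1$. It gives ($\Leftarrow$) immediately. It also reduces ($\Rightarrow$) to showing that the set $\{\mathtt{bcirc_z}(\mathcal D)\}$ is closed under multiplication, since $\mathtt{bcirc_z}(\mathcal D)$ is determined by its first block column $\mathtt{unfold}(\mathcal D)$.

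Your expansion has one conjugation slip. Moving the second $\mathbf j$ to the left gives $\mathbf jA_cP\,\mathbf jB_cP=-\overline{A_c}\,P\,B_c\,P$. So the $\mathbf d$-part is exactly $A_dB_d-\overline{A_c}\,PB_cP$, with no bar on $B_c$ and no further terms. Your $\mathbf c$-part $\overline{A_d}B_c+A_cPB_dP$ is right. Your three block-circulant facts cover either form, so the conclusion is unaffected.

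The Fourier route you set aside actually has no obstruction. Use $F_{n_3}\mathbf j=\mathbf j\,\overline{F_{n_3}}$ and $P_{n_3}F_{n_3}^*=F_{n_3}$. Then the $s$-th diagonal block of $(F_{n_3}\otimes I)\,\mathtt{bcirc_z}(\mathcal A)\,(F_{n_3}^*\otimes I)$ is $\hat A_{\mathbf d}^{(s)}+\mathbf j\,\hat A_{\mathbf c}^{(\sigma(s))}$. Here $\sigma(1)=1$ and $\sigma(s)=n_3+2-s$ for $s\ge 2$, and hats denote the ordinary DFT along the third mode of the complex tensors $\mathcal A_{\mathbf d},\mathcal A_{\mathbf c}$.

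As $\mathcal A_{\mathbf d}$ and $\mathcal A_{\mathbf c}$ vary, these blocks range independently over all quaternion matrices. Hence $\mathtt{bcirc_z}$ is a bijection onto the matrices of the form $(F_{n_3}^*\otimes I)\,D\,(F_{n_3}\otimes I)$ with $D$ an arbitrary block-diagonal quaternion matrix. That set is obviously closed under products, so ($\Rightarrow$) follows in one line once Lemma~\ref{zdc} is available.

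Your direct computation is longer, but it needs only $X\mathbf j=\mathbf j\overline X$ and elementary block-circulant algebra. It also avoids having to pin down what the ``DFT of a quaternion tensor'' in Lemma~\ref{zdc} actually is.
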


\begin{lemma}\cite{DY2026}\label{exchangeinverse}
    Let $\mathcal P\in\mathbb{Q}^{n\times n\times n_3}$. Then $\mathcal P$ is invertible if and only if $\mathtt{bcirc_z}(\mathcal P)$ is invertible.
    \end{lemma}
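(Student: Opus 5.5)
We need to prove Lemma \ref{exchangeinverse}: $\mathcal P$ is invertible, meaning there is $\mathcal Q$ with $\mathcal P*_Q\mathcal Q=\mathcal Q*_Q\mathcal P=\mathcal I_{nnn_3}$, if and only if $\mathtt{bcirc_z}(\mathcal P)$ is invertible. The plan is to move everything to the matrix side using Lemma \ref{newrelation}, and to use Lemma \ref{zdc} to show that the matrix inverse again has $z$-block circulant structure. First we check that $\mathtt{bcirc_z}(\mathcal I_{nnn_3})=I_{nn_3}$. Write $\mathcal I=\mathcal I_{\mathbf d}+\mathbf j\,0$ with $\mathcal I_{\mathbf d}$ the real identity tensor. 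Then $\mathtt{bcirc}(\mathcal I_{\mathbf d})=I$, and the $\mathbf j$-part vanishes.

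\emph{Forward direction.} Suppose $\mathcal P*_Q\mathcal Q=\mathcal Q*_Q\mathcal P=\mathcal I$. Lemma \ref{newrelation} gives
\[
\mathtt{bcirc_z}(\mathcal P)\mathtt{bcirc_z}(\mathcal Q)=\mathtt{bcirc_z}(\mathcal Q)\mathtt{bcirc_z}(\mathcal P)=I_{nn_3}.
\]
Hence $\mathtt{bcirc_z}(\mathcal P)$ is invertible, with inverse $\mathtt{bcirc_z}(\mathcal Q)$.

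\emph{Converse.} Suppose $M:=\mathtt{bcirc_z}(\mathcal P)$ is invertible. By Lemma \ref{zdc},
\[
M=(F_{n_3}^*\otimes I_n)\,diag(\hat{\mathcal P})\,(F_{n_3}\otimes I_n).
\]
The DFT factors $F_{n_3}\otimes I_n$ are unitary, hence invertible, so $diag(\hat{\mathcal P})$ is invertible. A block diagonal quaternion matrix is invertible exactly when each diagonal block is, so every $\hat{\mathcal P}^{(s)}$ is invertible. Let $\hat{\mathcal Q}^{(s)}:=(\hat{\mathcal P}^{(s)})^{-1}$, and let $\mathcal Q$ be the tensor whose transform is $\hat{\mathcal Q}$.

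\emph{Main obstacle: the inverse DFT.} We must justify that an arbitrary collection of transformed slices $\hat{\mathcal Q}^{(s)}$ comes from some quaternion tensor $\mathcal Q$ whose $\mathtt{bcirc_z}$ is diagonalized as in Lemma \ref{zdc}. In the quaternion setting the DFT acts on the $\mathbf d$- and $\mathbf c$-parts differently; this is exactly what the permutation $P_{n_3}$ records. So the correspondence between tensors and block-diagonal matrices must be checked to be a bijection. I would first try a dimension count: the map $\mathcal Q\mapsto diag(\hat{\mathcal Q})$ is real-linear and injective, since Lemma \ref{zdc} together with the invertible Fourier factors recovers $\mathtt{bcirc_z}(\mathcal Q)$, and its first block column is $\mathtt{unfold}(\mathcal Q)$. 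Both spaces have real dimension $4n^2n_3$, so the map is onto.

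If that argument is problematic because of noncommutativity, I would avoid the DFT entirely and argue directly. The set $\mathcal S=\{\mathtt{bcirc_z}(\mathcal Q)\}$ is a real subspace that contains $I$. It is closed under multiplication, since Lemma \ref{newrelation} turns products of its elements into $\mathtt{bcirc_z}$ of QT-products. Consider the map $X\mapsto MX$ on $\mathcal S$. It is injective because $M$ is invertible, and $\mathcal S$ is finite-dimensional, so it is surjective. Hence there is $\mathcal Q$ with $M\,\mathtt{bcirc_z}(\mathcal Q)=I$. In quaternion matrix algebra a one-sided inverse of a square matrix is two-sided (for instance via the complex representation), so also $\mathtt{bcirc_z}(\mathcal Q)M=I$. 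Finally, Lemma \ref{newrelation} read backwards gives $\mathcal P*_Q\mathcal Q=\mathcal Q*_Q\mathcal P=\mathcal I$, so $\mathcal P$ is invertible. This second route seems cleanest, and it is the one I would commit to.
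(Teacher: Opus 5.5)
The paper does not prove this lemma; it imports it from the cited reference. So there is no in-paper argument to compare against, and your proposal has to stand on its own. It does: the argument you commit to (the second one) is correct and complete.

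Real-linearity of $\mathtt{bcirc_z}$, the identity $\mathtt{bcirc_z}(\mathcal I)=I_{nn_3}$, and the multiplicativity in Lemma~\ref{newrelation} make $\mathcal S$ a finite-dimensional unital real subalgebra. Left multiplication by $M\in\mathcal S$ is then an injective, hence surjective, real-linear map $\mathcal S\to\mathcal S$, so $M^{-1}\in\mathcal S$. The ``$\Leftarrow$'' direction of Lemma~\ref{newrelation} transfers both identities back to $*_Q$. Your appeal to one-sided inverses being two-sided is unnecessary: $M$ is invertible by hypothesis, so $MX=I$ already gives $X=M^{-1}(MX)=M^{-1}$.

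Your first, DFT-based argument also works, and the obstacle you flagged is not real. The map $\mathcal Q\mapsto(F_{n_3}\otimes I_n)\mathtt{bcirc_z}(\mathcal Q)(F_{n_3}^*\otimes I_n)$ is real-linear. It is injective, since $P_{n_3}e_1=e_1$ makes the first block column of $\mathtt{bcirc_z}(\mathcal Q)$ equal to $\mathtt{unfold}(\mathcal Q)$. By Lemma~\ref{zdc} it takes values in block-diagonal matrices with $n_3$ blocks in $\mathbb{Q}^{n\times n}$, a real space of the same dimension $4n^2n_3$. Noncommutativity plays no role in a real dimension count, so the map is onto.

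As for what each route buys: the DFT route matches the toolkit the paper uses elsewhere (Theorems~\ref{indqtmax} and~\ref{MPMATLAB}). It also gives the computationally useful extra fact that the inverse is formed slice by slice, $\hat{\mathcal Q}^{(s)}=(\hat{\mathcal P}^{(s)})^{-1}$. Your algebraic route uses only linearity, the identity, and Lemma~\ref{newrelation}. It therefore does not depend on the exact form of the quaternion DFT $\hat{\mathcal A}$, which the paper leaves implicit. It is simply the standard fact that the inverse of an element of a finite-dimensional unital algebra lies in that algebra.
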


\begin{lemma}\cite{DY2026}\label{ct}
    Let $\mathcal A\in\mathbb Q^{n\times n\times n_3}$ and the conjugate transpose of $\mathcal A$ is defined by $\mathcal A^*$. Then 
    \begin{align*}
        \mathtt{bcirc_z}(\mathcal A^*)=\mathtt{bcirc_z}(\mathcal A)^*.
    \end{align*}
    \end{lemma}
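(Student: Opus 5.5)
The plan is a direct computation: split both sides into a complex part and a $\mathbf j$-part and compare them block by block. Write $\mathcal A=\mathcal A_{\mathbf d}+\mathbf j\mathcal A_{\mathbf c}$ with $\mathcal A_{\mathbf d},\mathcal A_{\mathbf c}\in\mathbb C^{n\times n\times n_3}$. Here $\mathcal A_{\mathbf d}^*$ denotes the complex T-product conjugate transpose: its slices are conjugate-transposed, and slices $2,\dots,n_3$ appear in reversed order. This reversal is exactly what $P_{n_3}$ encodes, so $\mathtt{unfold}(\mathcal A_{\mathbf d}^*)$ is obtained by applying $(P_{n_3}\otimes I_n)$ to the block column of conjugate-transposed slices. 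We also use the standard facts $\mathtt{bcirc}(\mathcal X^*)=\mathtt{bcirc}(\mathcal X)^*$ for complex tensors, $P_{n_3}^T=P_{n_3}=P_{n_3}^{-1}$, and, for any complex matrix $M$, $M\mathbf j=\mathbf j\overline M$.

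\emph{Step 1: the $\mathbf d$- and $\mathbf c$-parts of $\mathcal A^*$.} Using $M\mathbf j=\mathbf j\overline M$, the defining identity becomes
\[
\mathtt{unfold}(\mathcal A^*)=\mathtt{unfold}(\mathcal A_{\mathbf d}^*)+\mathbf j\bigl(-(P_{n_3}\otimes I_n)\,\overline{\mathtt{unfold}(\mathcal A_{\mathbf c}^*)}\bigr).
\]
Hence $(\mathcal A^*)_{\mathbf d}=\mathcal A_{\mathbf d}^*$ and $(\mathcal A^*)_{\mathbf c}=\mathcal C$, where $\mathcal C:=-\mathtt{fold}\bigl((P_{n_3}\otimes I_n)\overline{\mathtt{unfold}(\mathcal A_{\mathbf c}^*)}\bigr)$. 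Since the conjugate cancels the conjugation inside $\mathcal A_{\mathbf c}^*$, and applying $P_{n_3}$ twice restores the order, $\mathcal C$ has frontal slices $\mathcal C^{(k)}=-(\mathcal A_{\mathbf c}^{(k)})^T$ in the original order. By definition,
\[
\mathtt{bcirc_z}(\mathcal A^*)=\mathtt{bcirc}(\mathcal A_{\mathbf d})^*+\mathbf j\,\mathtt{bcirc}(\mathcal C)(P_{n_3}\otimes I_n).
\]

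\emph{Step 2: expand the right-hand side.} We have $(\mathbf j M)^*=M^*\mathbf j^*=-M^*\mathbf j=-\mathbf j\,\overline{M^*}=-\mathbf j M^T$ for complex $M$. Taking $M=\mathtt{bcirc}(\mathcal A_{\mathbf c})(P_{n_3}\otimes I_n)$ gives
\[
\mathtt{bcirc_z}(\mathcal A)^*=\mathtt{bcirc}(\mathcal A_{\mathbf d})^*-\mathbf j\,(P_{n_3}\otimes I_n)\,\mathtt{bcirc}(\mathcal A_{\mathbf c})^T.
\]
The complex parts of the two sides already agree. It therefore remains to prove the purely complex identity
\[
\mathtt{bcirc}(\mathcal C)(P_{n_3}\otimes I_n)=-(P_{n_3}\otimes I_n)\,\mathtt{bcirc}(\mathcal A_{\mathbf c})^T .
\]

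\emph{Step 3 (main obstacle): index bookkeeping.} Index slices modulo $n_3$, starting from $0$. The $(p,q)$ block of $\mathtt{bcirc}(\mathcal X)$ is $\mathcal X^{(p-q)}$, and $P_{n_3}$ sends block index $q$ to $-q$.
\begin{itemize}
\item On the left, the $(p,q)$ block is $\mathcal C^{(p+q)}=-(\mathcal A_{\mathbf c}^{(p+q)})^T$. This matches the explicit form of $\mathtt{bcirc_z}$ displayed in the paper.
\item On the right, the $(p,q)$ block is minus the $(-p,q)$ block of $\mathtt{bcirc}(\mathcal A_{\mathbf c})^T$. That block is the transpose of the $(q,-p)$ block of $\mathtt{bcirc}(\mathcal A_{\mathbf c})$, namely $(\mathcal A_{\mathbf c}^{(q+p)})^T$.
\end{itemize}
The two sides coincide, which proves the lemma. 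The only delicate points are the sign and conjugation conventions in Step 1, that is, the convention for $\mathcal A_{\mathbf c}^*$ and the move of $\mathbf j$ across complex matrices. I would double-check these on $n_3=2$ before writing out the general index computation.
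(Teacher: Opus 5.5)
Your proof is correct. The paper gives no argument of its own to compare against: the lemma is simply imported from \cite{DY2026}, so your computation serves as a self-contained verification from the definitions.

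Your route splits both sides into complex and $\mathbf j$-parts. It uses only $M\mathbf j=\mathbf j\overline{M}$ for complex $M$, $P_{n_3}^T=P_{n_3}=P_{n_3}^{-1}$, and $\mathtt{bcirc}(\mathcal X^*)=\mathtt{bcirc}(\mathcal X)^*$ for complex tensors. Step~1 is right: the slices of $(\mathcal A^*)_{\mathbf c}$ are $-(\mathcal A_{\mathbf c}^{(k)})^T$, in the original order. The index bookkeeping in Step~3 is also right: both sides have $(p,q)$ block $-(\mathcal A_{\mathbf c}^{(p+q)})^T$.

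A Fourier-based proof via Lemma~\ref{zdc} would not be shorter. You would still have to show independently that the Fourier slices of $\mathcal A^*$ are $(\hat A^{(i)})^*$, which is the same bookkeeping in disguise. The $P_{n_3}$ twist is exactly what lets $\mathbf j$ pass through the DFT, since $\overline{F_{n_3}}=P_{n_3}F_{n_3}$.

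State one point explicitly in a write-up, namely the convention you already flagged. In the definition of $\mathcal A^*$, $\mathcal A_{\mathbf d}^*$ and $\mathcal A_{\mathbf c}^*$ must be the T-product conjugate transposes, with slices $2,\dots,n_3$ reversed. If they were merely slice-wise conjugate transposes, your $\mathcal C$ would have slices $-(\mathcal A_{\mathbf c}^{(-k)})^T$. The $\mathbf j$-parts would then be indexed by $-(p+q)$ on one side and $p+q$ on the other, and the identity would fail as soon as $n_3\ge 3$.
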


\begin{lemma}\cite{DY2026}\label{unitary}
        A quaternion tensor $\mathcal A\in\mathbb{Q}^{n\times n\times n_3}$ is unitary if and only if $\mathtt{bcirc_z}(\mathcal A)$ is unitary.
\end{lemma}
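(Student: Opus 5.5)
The plan is to reduce the statement to the corresponding fact for the matrix $\mathtt{bcirc_z}(\mathcal A)$, using the multiplicativity of $\mathtt{bcirc_z}$ in Lemma~\ref{newrelation} and its compatibility with conjugate transposition in Lemma~\ref{ct}. A quaternion tensor $\mathcal A$ is unitary when $\mathcal A*_Q\mathcal A^*=\mathcal A^**_Q\mathcal A=\mathcal I_{nnn_3}$.

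\emph{Computing $\mathtt{bcirc_z}$ of the identity.} Write $\mathcal I=\mathcal I_{nnn_3}=\mathcal I_{\mathbf d}+\mathbf j\mathcal I_{\mathbf c}$, with $\mathcal I_{\mathbf d}=\mathcal I$ (a real tensor) and $\mathcal I_{\mathbf c}=0$. Then
\[
\mathtt{bcirc_z}(\mathcal I)=\mathtt{bcirc}(\mathcal I)=I_{nn_3}.
\]
The last equality holds because only the first frontal slice is nonzero and it equals $I_n$, so only the diagonal blocks survive. Conversely, $\mathtt{bcirc_z}$ is injective: its first block column is $\mathtt{unfold}(\mathcal A)$, with blocks $\mathcal A_{\mathbf d}^{(i)}+\mathbf j\mathcal A_{\mathbf c}^{(i)}=\mathcal A^{(i)}$. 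Hence $\mathtt{bcirc_z}(\mathcal C)=I_{nn_3}$ forces $\mathcal C=\mathcal I$.

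\emph{($\Rightarrow$)} Suppose $\mathcal A*_Q\mathcal A^*=\mathcal I$. Lemma~\ref{newrelation} gives $\mathtt{bcirc_z}(\mathcal A)\mathtt{bcirc_z}(\mathcal A^*)=\mathtt{bcirc_z}(\mathcal I)=I$. Lemma~\ref{ct} rewrites this as $\mathtt{bcirc_z}(\mathcal A)\mathtt{bcirc_z}(\mathcal A)^*=I$. The same argument applied to $\mathcal A^**_Q\mathcal A=\mathcal I$ gives the other product, so $\mathtt{bcirc_z}(\mathcal A)$ is unitary.

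\emph{($\Leftarrow$)} Suppose $\mathtt{bcirc_z}(\mathcal A)\mathtt{bcirc_z}(\mathcal A)^*=I$. By Lemma~\ref{ct} this reads $\mathtt{bcirc_z}(\mathcal A)\mathtt{bcirc_z}(\mathcal A^*)=\mathtt{bcirc_z}(\mathcal I)$. Lemma~\ref{newrelation}, used from right to left, then gives $\mathcal A*_Q\mathcal A^*=\mathcal I$. The other product is handled the same way. Alternatively, a one-sided inverse of a square quaternion matrix is two-sided, so one product suffices.

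\emph{Main obstacle.} Everything above rests on Lemmas~\ref{newrelation} and~\ref{ct}. The one point to check is the identification $\mathtt{bcirc_z}(\mathcal I)=I_{nn_3}$, and in particular that the permutation factor $P_{n_3}\otimes I_n$ plays no role. It does not, because it multiplies $\mathtt{bcirc}(\mathcal I_{\mathbf c})=0$. Once that is settled, the proof is a direct transfer of identities.
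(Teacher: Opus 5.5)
Your proof is correct. The paper gives no argument of its own here, since the lemma is imported from \cite{DY2026}. Your transfer of $\mathcal A*_Q\mathcal A^{*}=\mathcal A^{*}*_Q\mathcal A=\mathcal I$ through $\mathtt{bcirc_z}$ uses Lemma~\ref{newrelation} in both directions, Lemma~\ref{ct}, and $\mathtt{bcirc_z}(\mathcal I)=I_{nn_3}$; this is the same identity-transfer technique the paper uses for Lemma~\ref{AKA-1} and Lemma~\ref{rhos}(3). Your separate injectivity check is not needed, because Lemma~\ref{newrelation} is already stated as an equivalence, but it is correct and harmless.
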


Moreover, the following properties are presented.
\begin{lemma}\label{rhos}
Suppose that $\mathcal A\in\mathbb Q^{n_1\times m\times n_3}$, $\mathcal B\in\mathbb Q^{m\times n_2\times n_3}$. Then
\begin{enumerate}
\item $\scalebox{1.3}{$\rho$}_{QT}(\mathcal{A})\leq \|\mathcal A\|_s;$
\item $\|\mathcal A*_Q\mathcal B\|_s\leq \|\mathcal A\|_s\|\mathcal B\|_s;$
\item $(\mathcal A*_Q\mathcal B)^*=\mathcal B^**_Q\mathcal A^*;$
\item $(\mathcal A*_Q\mathcal B)*_Q\mathcal C=\mathcal A*_Q(\mathcal B*_Q\mathcal C), \text{where $\mathcal C\in\mathbb Q^{n_2\times n\times n_3}$};$
\item $\mathcal A*_Q(\mathcal B+\mathcal C)=\mathcal A*_Q\mathcal B+\mathcal A*_Q\mathcal C, \text{where $\mathcal C\in\mathbb Q^{m\times n_2\times n_3}$}.$
\end{enumerate}
\begin{proof}
    The first inequality is directly follows by Definition \ref{QTSPECTRAL} and Definition \ref{defrhoqt}. 
    
    In addition, $\|\mathtt{bcirc_z}(\mathcal{A})\mathtt{bcirc_z}(\mathcal{B})\|_2\leq\|\mathtt{bcirc_z}(\mathcal{A})\|_2\|\mathtt{bcirc_z}(\mathcal{B})\|_2$ follows directly from the properties of the matrix norm. By Lemma \ref{newrelation}, we have $\|\mathtt{bcirc_z}(\mathcal{A})\mathtt{bcirc_z}(\mathcal{B})\|_2=\|\mathtt{bcirc_z}(\mathcal{A}*_Q\mathcal{B})\|_2$. From Definition \ref{QTSPECTRAL}, we conclude that $\|\mathcal{A}*_Q\mathcal{B}\|_s\leq\|\mathcal A\|_s\|\mathcal B\|_s$. 
    
    Furthermore, by Lemma \ref{newrelation} and Lemma \ref{ct}, we obtain 
    \begin{align*}
     \mathtt{bcirc_z}\big((\mathcal A*_Q\mathcal B)^*\big)
     &=\mathtt{bcirc_z}(\mathcal A*_Q\mathcal B)^*=\big(\mathtt{bcirc_z}(\mathcal A)\mathtt{bcirc_z}(\mathcal B)\big)^*
     \\&=\mathtt{bcirc_z}(\mathcal B)^*\mathtt{bcirc_z}(\mathcal A)^*=\mathtt{bcirc_z}(\mathcal B^*)\mathtt{bcirc_z}(\mathcal A^*),
    \end{align*} which implies $(\mathcal A*_Q\mathcal B)^*=\mathcal B^**_Q\mathcal A^*.$ 
    
    The remaining steps follow by analogous arguments and are omitted for brevity.
\end{proof}
\end{lemma}

\section{QT-Drazin inverse of $\mathtt{bcirc_z}(\mathcal A)$}

In this section, we derive several key results concerning the QT-Drazin inverse, which are instrumental for the perturbation analysis that follows. We begin by introducing the QT-rank, QT-index, and QT-Drazin inverse of a third-order quaternion tensor $\mathcal A$.

\begin{definition}\cite{AMLzhang2022}\label{rankqt}
     Let $\mathcal A\in\mathbb{Q}^{n_1\times n_2\times n_3}$ and its QT-SVD $\mathcal A=\mathcal U*_Q\mathcal S *_Q\mathcal V^*$. The QT-rank of $\mathcal A$ is the number of nonzero elements of ${\mathcal S(i,i,:)}_{i=1}^{m}$, where $m\stackrel{\cdot}{=}min(n_1,n_2)$. Specifically,
    \begin{align*}
    \operatorname{rank}_{QT}(\mathcal A)\stackrel{\cdot}{=}\#\{i|\|\mathcal S(i,i,:)\|_F>0\}.
    \end{align*}
     Furthermore, the $i$-th singular value of $\mathcal A$ is defined by $\sigma_i(\mathcal A)\stackrel{\cdot}{=}\|S(i,i,:)\|_F$ for $i\in[m]$.
\end{definition}

With the preceding definition in place, we next define the QT-index.
\begin{definition}\label{QT_index}\cite{PJOOma2024}
    Let $\mathcal{A}\in\mathbb{Q}^{n\times n\times n_3}$. The \emph{QT-index} of $\mathcal{A},$ written as $\operatorname{Ind}_{QT}(\mathcal{A}),$ is defined to be the minimal nonnegative integer $k$ for which
\[
\operatorname{rank}_{QT}(\mathcal{A}^{k+1})=\operatorname{rank}_{QT}(\mathcal{A}^k).
\]
\end{definition}
Then, we state the definition of the QT-Drazin inverse.
\begin{definition}\cite{PJOOma2024}
Let $\mathcal{A}\in\mathbb{Q}^{n\times n\times n_3}$ with $\operatorname{Ind}_{QT}(\mathcal{A})=k$. A quaternion tensor $\mathcal{X}\in\mathbb{Q}^{n\times n\times n_3}$ is called the \emph{QT-Drazin inverse} of $\mathcal{A}$ if it satisfies
\[
\mathcal{A}^k *_Q \mathcal{X} *_Q \mathcal{A} = \mathcal{A}^k, \qquad
\mathcal{X} *_Q \mathcal{A} *_Q \mathcal{X} = \mathcal{X}, \qquad
\mathcal{A} *_Q \mathcal{X} = \mathcal{X} *_Q \mathcal{A}.
\]
In this case, the QT-Drazin inverse of $\mathcal{A}$ is denoted by $\mathcal{A}^D$.
\end{definition}

Furthermore, we explore the relationship between the QT-Drazin inverse of the $z$-block circulant matrix $ \mathtt{bcirc_z}(\mathcal A)$ and the $z$-block circulant representation of $\mathcal A^D$. Before proceeding, we first present the following lemma.
    

\begin{lemma}\label{AKA-1}
     Let $\mathcal A\in\mathbb{Q}^{n\times n\times n_3}$. Then $\mathtt{bcirc_z}(\mathcal A^k)=\mathtt{bcirc_z}(\mathcal A)^k$. If $\mathcal A$ is invertible, we obtain $\mathtt{bcirc_z}(\mathcal A^{-1})=\mathtt{bcirc_z}(\mathcal A)^{-1}.$
     \begin{proof}
         Note that for any quaternion tensor $\mathcal A\in\mathbb Q^{n\times n\times n_3}$, we utilize Lemma \ref{newrelation} to derive
\begin{align*}
   \mathtt{bcirc_z}(\mathcal A^2)=\mathtt{bcirc_z}(\mathcal A*_Q\mathcal A)=\mathtt{bcirc_z}(\mathcal A)\mathtt{bcirc_z}(\mathcal A) =\mathtt{bcirc_z}(\mathcal A)^2.
\end{align*}
By induction, it follows that $\mathtt{bcirc_z}(\mathcal A^k)=\mathtt{bcirc_z}(\mathcal A)^k$, for any positive integer $k\geq0$. Furthermore, since $\mathcal A*\mathcal A^{-1}=\mathcal I$, which yields the following relation by Lemma \ref{newrelation} and Lemma \ref{exchangeinverse}:
\begin{align*}
\mathtt{bcirc_z}(\mathcal A)\mathtt{bcirc_z}(\mathcal A^{-1})=\mathtt{bcirc_z}(\mathcal I)=I_{nn_3}=\mathtt{bcirc_z}(\mathcal A)\mathtt{bcirc_z}(\mathcal A)^{-1}.
\end{align*}
Therefore, the desired result follows.
     \end{proof}
\end{lemma}

In what follows, we investigate the relation between $ \mathtt{bcirc_z}(\mathcal A)^D$ and $ \mathtt{bcirc_z}(\mathcal A^D)$, as follows.
\begin{lemma}\label{zad}
    Let $\mathcal A\in\mathbb{Q}^{n\times n\times n_3}$ and $\mathcal A^D\in\mathbb{Q}^{n\times n\times n_3}$ with $Ind_{QT}(\mathcal A)=k$. Then
    \begin{align*}
        \mathtt{bcirc_z}(\mathcal A)^D=\mathtt{bcirc_z}(\mathcal A^D).
    \end{align*}
\begin{proof}
      According to the QT-Drazin inverse, the following three equations can be obtained:
\begin{align*}
    &\mathtt{bcirc_z}(\mathcal A)^D=\mathtt{bcirc_z}(\mathcal A)^D\mathtt{bcirc_z}(\mathcal A)\mathtt{bcirc_z}(\mathcal A)^D,\\
    &\mathtt{bcirc_z}(\mathcal A)\mathtt{bcirc_z}(\mathcal A)^D=\mathtt{bcirc_z}(\mathcal A)^D\mathtt{bcirc_z}(\mathcal A),\\
    &\mathtt{bcirc_z}(\mathcal A)^k\mathtt{bcirc_z}(\mathcal A)^D\mathtt{bcirc_z}(\mathcal A)=\mathtt{bcirc_z}(\mathcal A)^k.
\end{align*}
On the other hand, by utilizing Lemma \ref{newrelation}, we also have
\begin{align*}
    &\mathtt{bcirc_z}(\mathcal A^D)=\mathtt{bcirc_z}(\mathcal A^D)\mathtt{bcirc_z}(\mathcal A)\mathtt{bcirc_z}(\mathcal A^D),\\
    &\mathtt{bcirc_z}(\mathcal A)\mathtt{bcirc_z}(\mathcal A^D)=\mathtt{bcirc_z}(\mathcal A^D)\mathtt{bcirc_z}(\mathcal A),\\
    &\mathtt{bcirc_z}(\mathcal A^k)\mathtt{bcirc_z}(\mathcal A^D)\mathtt{bcirc_z}(\mathcal A)=\mathtt{bcirc_z}(\mathcal A^k).
\end{align*}
By Lemma \ref{AKA-1}, we obtain $$\mathtt{bcirc_z}(\mathcal A^k)\mathtt{bcirc_z}(\mathcal A^D)\mathtt{bcirc_z}(\mathcal A)=\mathtt{bcirc_z}(\mathcal A^k)$$ equals to $$\mathtt{bcirc_z}(\mathcal A)^k\mathtt{bcirc_z}(\mathcal A)^D\mathtt{bcirc_z}(\mathcal A)=\mathtt{bcirc_z}(\mathcal A)^k.$$
Due to the uniqueness of the Drazin inverse, the desired result is obtained.  
\end{proof}
\end{lemma}

The following main theorem establishes a connection between the QT-index of a tensor and the indices of the diagonal blocks in its associated block-diagonalized matrix, thereby enabling a more transparent analysis of the QT-index from the matrix viewpoint.
\begin{theorem}\label{indqtmax}
Let $\mathcal{A}\in\mathbb{Q}^{n\times n\times n_3}$ and 
$\mathtt{bcirc_z}(\mathcal{A})$ be its $z$-block circulant matrix. 
Under the discrete Fourier transform along the third mode, we obtain
\[
(F_{n_3}\otimes I_n)\,
\mathtt{bcirc_z}(\mathcal{A})\,
(F_{n_3}^{*}\otimes I_n)
=diag
\big(
\hat{A}^{(1)},
\dots,
\hat{A}^{(n_3)}
\big),
\]
where $\hat{A}^{(i)}\in\mathbb{Q}^{n\times n}$ are the frontal slices of $\hat{\mathcal{A}}$.
Then the QT-index of \(\mathcal{A}\) satisfies
\[
\operatorname{Ind}_{QT}(\mathcal{A})=\max_{1\le i\le n_3}\operatorname{Ind}\big(\hat{A}^{(i)}\big),
\]
where \(\operatorname{Ind}(\cdot)\) denotes the index of a square matrix, i.e., the smallest nonnegative integer \(k\) such that \(\operatorname{rank}(M^{k+1})=\operatorname{rank}(M^{k})\).
\end{theorem}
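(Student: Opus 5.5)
The plan is to reduce everything to ranks of block-diagonal matrices. The first step is to relate the QT-rank of a tensor to the rank of its $z$-block circulant matrix: $\operatorname{rank}_{QT}$ is not literally $\operatorname{rank}(\mathtt{bcirc_z}(\cdot))$, so I need a monotonicity statement that survives the passage. I will work with the conventional tensor-index characterization used in \cite{PJOOma2024}. In the QT-SVD $\mathcal A=\mathcal U*_Q\mathcal S*_Q\mathcal V^*$, Lemmas \ref{newrelation}, \ref{unitary} and \ref{zdc} show that $\operatorname{rank}(\mathtt{bcirc_z}(\mathcal A))=\sum_{i=1}^{n_3}\operatorname{rank}(\hat A^{(i)})$. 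Here unitary factors do not change rank, and $\mathtt{bcirc_z}(\mathcal S)$ is unitarily similar to $diag(\hat{\mathcal S})$. I will therefore read the condition $\operatorname{rank}_{QT}(\mathcal A^{k+1})=\operatorname{rank}_{QT}(\mathcal A^{k})$ in Definition \ref{QT_index} as the equality of the full rank profile, i.e.\ of $\operatorname{rank}(\mathtt{bcirc_z}(\mathcal A^{k}))$. This is the interpretation under which $\operatorname{Ind}_{QT}$ is well defined and $\mathcal A^D$ exists as in Lemma \ref{zad}. I will state this identification explicitly at the start.

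Next, by Lemma \ref{AKA-1} and Lemma \ref{zdc}, for every $k\ge0$,
\[
(F_{n_3}\otimes I_n)\mathtt{bcirc_z}(\mathcal A^k)(F_{n_3}^*\otimes I_n)=\big((F_{n_3}\otimes I_n)\mathtt{bcirc_z}(\mathcal A)(F_{n_3}^*\otimes I_n)\big)^k=diag\big((\hat A^{(1)})^k,\dots,(\hat A^{(n_3)})^k\big).
\]
Since $F_{n_3}\otimes I_n$ is unitary, this gives
\[
r_k:=\operatorname{rank}\big(\mathtt{bcirc_z}(\mathcal A^k)\big)=\sum_{i=1}^{n_3}\operatorname{rank}\big((\hat A^{(i)})^k\big)=:\sum_i r_k^{(i)} .
\]
One technical point needs checking here: the $\hat A^{(i)}$ are quaternion matrices, while $F_{n_3}$ has complex entries. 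Multiplication is noncommutative, so I must verify that the rank of a quaternion matrix is invariant under multiplication by invertible complex matrices on both sides. This holds because quaternion rank is the rank over the quaternion skew field, and invertible complex matrices are invertible quaternion matrices.

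The core combinatorial step uses the standard facts about each sequence $r_k^{(i)}$. Each is nonincreasing, and once $r_{k}^{(i)}=r_{k+1}^{(i)}$ it stays constant. Moreover, $r_k^{(i)}>r_{k+1}^{(i)}$ holds exactly for $k<\operatorname{Ind}(\hat A^{(i)})$. The first claim holds because $\operatorname{range}(M^{k+1})\subseteq\operatorname{range}(M^k)$, and equality propagates by applying $M$. This argument is valid over $\mathbb Q$ using right-module column spaces, or alternatively through the complex adjoint representation, which doubles ranks and preserves indices.

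Let $m=\max_i\operatorname{Ind}(\hat A^{(i)})$. For $k\ge m$, every summand is stable, so $r_{k+1}=r_k$. For $k<m$, the block attaining the maximum strictly drops, while all others are nonincreasing, so $r_{k+1}<r_k$. Hence the minimal $k$ with $r_{k+1}=r_k$ is exactly $m$, which gives the theorem.

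I expect the main obstacle to be the first step: justifying that equality of QT-ranks of consecutive powers is equivalent to equality of $\operatorname{rank}(\mathtt{bcirc_z}(\cdot))$. If QT-rank is taken literally as the count of nonzero tubes of $\mathcal S$, it can stay constant while individual Fourier blocks lose rank. I would first try to show that the definition in \cite{PJOOma2024} is intended, or proved there to be equivalent, through the block-diagonal ranks. Failing that, I would add this as the standing interpretation, consistent with the existence of $\mathcal A^D$ established in Lemma \ref{zad}.
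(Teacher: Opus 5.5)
Your argument is essentially the paper's proof: you block-diagonalize $\mathtt{bcirc_z}(\mathcal A^k)$ via Lemmas \ref{AKA-1} and \ref{zdc}, write its rank as $\sum_i\operatorname{rank}\big((\hat A^{(i)})^k\big)$, and use stabilization of each block's rank sequence, and your strict-drop argument for $k<m$ supplies the ``only if'' direction that the paper leaves implicit. Your worry about the first step is justified and applies equally to the paper, which simply asserts $\operatorname{rank}_{QT}(\mathcal A^k)=\operatorname{rank}(\mathtt{bcirc_z}(\mathcal A^k))$ ``by Definition \ref{rankqt}''; this is false for the literal count of nonzero tubes (for $n_3=2$, a complex tensor with Fourier slices $I_2$ and $N=\left(\begin{smallmatrix}0&1\\ 0&0\end{smallmatrix}\right)$ has QT-rank $2$ for every power, hence QT-index $0$, whereas $\max_i\operatorname{Ind}(\hat A^{(i)})=\operatorname{Ind}(N)=2$), so you should adopt the $\mathtt{bcirc_z}$-rank reading as a standing convention rather than try to prove an equivalence.
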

\begin{proof}
From Lemma \ref{newrelation} and Lemma \ref{AKA-1}, we have \(\mathtt{bcirc_z}(\mathcal{A}^k)=\mathtt{bcirc_z}(\mathcal{A})^k\) for any positive integer \(k\). By Lemma \ref{zdc}, the unitary matrix $F_{n_3}\otimes I_n$ diagonalizes 
$\mathtt{bcirc_z}(\mathcal{A})$, namely,
\[
(F_{n_3}\otimes I_n)\,\mathtt{bcirc_z}(\mathcal{A})\,(F_{n_3}^*\otimes I_n)=diag\big(\hat{A}^{(1)},\dots,\hat{A}^{(n_3)}\big)\equiv\hat{D}.
\]
Furthermore,
\[
(F_{n_3}\otimes I_n)\,\mathtt{bcirc_z}(\mathcal{A}^k)\,(F_{n_3}^*\otimes I_n)=\hat{D}^k=diag\big((\hat{A}^{(1)})^k,\dots,(\hat{A}^{(n_3)})^k\big).
\]
Since rank is invariant under unitary similarity, it follows that
\[
\operatorname{rank}\big(\mathtt{bcirc_z}(\mathcal{A}^k)\big)=\operatorname{rank}(\hat{D}^k)=\sum_{i=1}^{n_3}\operatorname{rank}\big((\hat{A}^{(i)})^k\big).
\]

On the other hand, by Definition \ref{rankqt}, the QT-rank of $\mathcal{A}^k$
coincides with the rank of $\mathtt{bcirc_z}(\mathcal{A}^k)$. 
Since $\hat{D}^k$ is block diagonal, we have
\[
\operatorname{rank}_{QT}(\mathcal{A}^k)=\sum_{i=1}^{n_3}\operatorname{rank}\big((\hat{A}^{(i)})^k\big)=\operatorname{rank}\big(\mathtt{bcirc_z}(\mathcal{A}^k)\big).
\]
Consequently, \(\operatorname{rank}_{QT}(\mathcal{A}^{k+1})=\operatorname{rank}_{QT}(\mathcal{A}^k)\) if  \[
\operatorname{rank}\big((\hat{A}^{(i)})^{k+1}\big)
=
\operatorname{rank}\big((\hat{A}^{(i)})^k\big),
\quad 1\le i\le n_3,
\]
that is, $k\ge \operatorname{Ind}(\hat{A}^{(i)})$ for all $i$. 
Hence, the minimal such $k$ equals $\max\limits_{1\le i\le n_3}\operatorname{Ind}\big(\hat{A}^{(i)}\big),$
which, by Definition \ref{QT_index}, coincides with 
$\operatorname{Ind}_{QT}(\mathcal{A})$.
\end{proof}

The following main result provides a formula expressing the Drazin inverse of a quaternion tensor in terms of its Moore–Penrose inverse, thereby making it possible to compute the Drazin inverse of a quaternion tensor directly in MATLAB.
\begin{theorem}\label{MPMATLAB}
Let \(\mathcal{A}\in\mathbb{Q}^{n\times n\times n_3}\) be a third-order quaternion tensor with \(\operatorname{Ind}_{QT}(\mathcal{A})=k\). Then, for any integer \(l\ge k\),
\[
\mathcal{A}^D = \mathcal{A}^l *_Q (\mathcal{A}^{2l+1})^\dagger *_Q \mathcal{A}^l,
\]
where \(\mathcal{A}^D\) denotes the QT-Drazin inverse of $\mathcal{A}$, 
and $(\cdot)^\dagger$ denotes the QT-Moore--Penrose inverse.
\end{theorem}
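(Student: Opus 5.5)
The plan is to move the identity to the matrix level through $\mathtt{bcirc_z}$, prove the corresponding statement for quaternion matrices, and then pull it back to tensors.

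\textbf{Step 1: transfer to the matrix level.} Put $A:=\mathtt{bcirc_z}(\mathcal A)$. By Lemma \ref{zad}, $A^D=\mathtt{bcirc_z}(\mathcal A^D)$. By Lemma \ref{AKA-1}, $\mathtt{bcirc_z}(\mathcal A^m)=A^m$ for every $m$. I also need $\mathtt{bcirc_z}(\mathcal B^\dagger)=\mathtt{bcirc_z}(\mathcal B)^\dagger$ for the QT-Moore--Penrose inverse. I would prove this exactly as in Lemma \ref{zad}. The four Penrose equations for $\mathcal B^\dagger$ transfer under $\mathtt{bcirc_z}$, using Lemma \ref{newrelation} for the products and Lemma \ref{ct} for the conjugate transposes. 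Uniqueness of the matrix Moore--Penrose inverse then gives the claim. Applying Lemma \ref{newrelation} to the triple product, it suffices to prove the matrix identity
\[
A^D=A^l (A^{2l+1})^\dagger A^l,\qquad l\ge k.
\]
Once this holds, injectivity of $\mathtt{bcirc_z}$ (the tensor is recovered from the first block column) returns the tensor statement.

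\textbf{Step 2: the index of $A$.} I need $\operatorname{Ind}(A)=k$, or at least $\operatorname{Ind}(A)\le l$. The proof of Theorem \ref{indqtmax} shows that $\operatorname{rank}_{QT}(\mathcal A^m)=\operatorname{rank}(A^m)$. Hence the minimal $k$ with $\operatorname{rank}(A^{k+1})=\operatorname{rank}(A^k)$ equals $\operatorname{Ind}_{QT}(\mathcal A)$.

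\textbf{Step 3: the matrix identity.} Put $X:=A^l (A^{2l+1})^\dagger A^l$. The key facts come from $\operatorname{rank}(A^{2l+1})=\operatorname{rank}(A^l)$ when $l\ge k$. Since $\mathcal R(A^{2l+1})\subseteq\mathcal R(A^l)$, equal rank gives $\mathcal R(A^{2l+1})=\mathcal R(A^l)$. Similarly $\mathcal N(A^{2l+1})=\mathcal N(A^l)$. For quaternion matrices, rank is understood via the complex adjoint representation, so these range and null space arguments still hold. Writing $M=A^{2l+1}$, the projector $MM^\dagger$ onto $\mathcal R(M)$ gives $MM^\dagger A^l=A^l$. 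Likewise $M^\dagger M$, the projector onto $\mathcal R(M^*)=\mathcal N(M)^\perp=\mathcal N(A^l)^\perp$, gives $A^lM^\dagger M=A^l$.

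I then verify the three Drazin equations.
\begin{itemize}
\item \emph{Commutation.} We have $AX=A^{l+1}M^\dagger A^l$ and $XA=A^lM^\dagger A^{l+1}$. The cleanest route is to write $A^l=MY=ZM$ for some matrices $Y,Z$, from the range and null space equalities, and substitute. Then $AX=A^{l+1}M^\dagger A^{l}=A^{l+1}M^\dagger M Y=A^{l+1}Y$, using $A^{l+1}M^\dagger M=A\cdot A^lM^\dagger M=A^{l+1}$. In the same way $XA=ZA^{l+1}$. I would then check that both equal the projector with range $\mathcal R(A^l)$ along $\mathcal N(A^l)$. This is the main technical point.
\item \emph{Outer inverse.} $XAX=X$ follows similarly, reducing to $M^\dagger M M^\dagger=M^\dagger$.
\item \emph{The index equation.} $A^{l+1}X=A^{2l+1}M^\dagger A^l=MM^\dagger A^l=A^l$, which gives $A^kXA=A^k$ once commutation is known, since $l\ge k$.
\end{itemize}

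As an alternative to the projector bookkeeping, I could use the core--nilpotent decomposition $A=P\,\mathrm{diag}(C,N)P^{-1}$, with $C$ invertible and $N^l=0$, available for quaternion matrices through the Jordan form. Then $A^l=P\,\mathrm{diag}(C^l,0)P^{-1}$, and the formula becomes a direct check, provided one handles that $P$ is not unitary by using the range and null space characterization of $\dagger$.

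Finally, uniqueness of the Drazin inverse gives $X=A^D$, and Step 1 converts this back to $\mathcal A^D=\mathcal A^l *_Q (\mathcal A^{2l+1})^\dagger *_Q \mathcal A^l$. I expect the commutation step to be the main obstacle.
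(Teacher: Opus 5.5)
Your proof is correct, but at the decisive step it takes a different route from the paper. The paper also passes to $B=\mathtt{bcirc_z}(\mathcal A)$. It then conjugates by the unitary $F_{n_3}\otimes I_n$ (Lemma \ref{zdc}) and uses that powers, $(\cdot)^\dagger$ and $(\cdot)^D$ are compatible with unitary similarity and act blockwise on block-diagonal matrices. It takes $\operatorname{Ind}(\hat A^{(i)})\le k$ from Theorem \ref{indqtmax} and quotes the known matrix formula (Proposition~3.1 of its cited reference) for each Fourier block $\hat A^{(i)}$.

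You never split $A$ into Fourier blocks in Step~3. Instead you prove $A^D=A^l(A^{2l+1})^\dagger A^l$ directly for the whole quaternion matrix, using $\mathcal R(A^{2l+1})=\mathcal R(A^l)$ and $\mathcal N(A^{2l+1})=\mathcal N(A^l)$. The paper's route is shorter once the Fourier machinery and the citation are granted, but it leaves implicit that the quoted matrix result applies to quaternion matrices. Yours is self-contained, works verbatim over $\mathbb Q$, and is in effect a proof of that quoted proposition.

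You also actually justify $\mathtt{bcirc_z}(\mathcal B^\dagger)=\mathtt{bcirc_z}(\mathcal B)^\dagger$ through the four Penrose equations and Lemma \ref{ct}. The paper only asserts this, citing Lemmas \ref{AKA-1} and \ref{zad}, and neither of those concerns the Moore--Penrose inverse.

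\textbf{Tightening commutation.} No projector bookkeeping is needed. With $M=A^{2l+1}$ and $A^l=MY=ZM$,
\[
A^{l+1}Y=(ZA^{2l+1})AY=ZA^{2l+2}Y=ZA\,(A^{2l+1}Y)=ZA^{l+1},
\]
so your expressions $AX=A^{l+1}Y$ and $XA=ZA^{l+1}$ coincide immediately.

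\textbf{Tightening the index equation.} Passing from $A^{l+1}X=A^l$ to $A^{k+1}X=A^k$ (which, with commutation, is the required $A^kXA=A^k$) uses $\mathcal N(A^l)=\mathcal N(A^k)$. Indeed, $E=I-AX$ satisfies $A^lE=0$, hence $A^kE=0$.

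\textbf{A caveat on Step~2.} Your Step~2, exactly like the paper's appeal to Theorem \ref{indqtmax}, rests on the identification $\operatorname{rank}_{QT}(\mathcal A^m)=\operatorname{rank}(\mathtt{bcirc_z}(\mathcal A^m))$ asserted in that proof. Counting nonzero singular tubes as in Definition \ref{rankqt}, with the usual decreasing ordering in each Fourier block, literally gives $\max_i\operatorname{rank}((\hat A^{(i)})^m)$ rather than the sum. So this is the point where the argument effectively adopts $\operatorname{Ind}_{QT}(\mathcal A)=\operatorname{Ind}(\mathtt{bcirc_z}(\mathcal A))$, and you should state that explicitly.
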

\begin{proof}
Let \(B = \mathtt{bcirc_z}(\mathcal{A})\in\mathbb{Q}^{nn_3\times nn_3}\). By Lemma \ref{AKA-1} and Lemma \ref{zad}, we have
\[
\mathtt{bcirc_z}(\mathcal{A}^D)=B^D,\qquad 
\mathtt{bcirc_z}(\mathcal{A}^\dagger)=B^\dagger,\qquad 
\mathtt{bcirc_z}(\mathcal{A}^l)=B^l.
\]
Hence, the desired equality \(\mathcal{A}^D=\mathcal{A}^l *_Q (\mathcal{A}^{2l+1})^\dagger *_Q \mathcal{A}^l\) holds if and only if
\begin{align}\label{BDBL}
B^D = B^l (B^{2l+1})^\dagger B^l.
\end{align}
Since \(B\) is a $z$-block circulant matrix, by Lemma \ref{zdc}, there exists a unitary matrix \(F = F_{n_3}\otimes I_n\) such that
\[
F B F^* = diag(\hat{A}^{(1)},\hat{A}^{(2)},\dots,\hat{A}^{(n_3)}),
\]
where each \(\hat{A}^{(i)}\in\mathbb{Q}^{n\times n}\) is the \(i\)-th frontal slice of \(\hat{\mathcal{A}}\). Since $F$ is unitary, matrix powers, the Moore--Penrose inverse, and the Drazin inverse are all invariant under unitary similarity transformations. Consequently, applying these invariance properties to the unitary block diagonalization of $B$, we obtain
\begin{align}\label{FDF*}
&F B^l F^* = diag\big((\hat{A}^{(1)})^l,\dots,(\hat{A}^{(n_3)})^l\big),\notag\\
&F (B^{2l+1})^\dagger F^* = diag\Big(\big((\hat{A}^{(1)})^{2l+1}\big)^\dagger,\dots,\big((\hat{A}^{(n_3)})^{2l+1}\big)^\dagger\big),\notag\\ 
&F B^D F^* = diag\big((\hat{A}^{(1)})^D,\dots,(\hat{A}^{(n_3)})^D\big).
\end{align}

By Theorem \ref{indqtmax}, \(\operatorname{Ind}_{QT}(\mathcal{A})=\max\limits_{1\le i\le n_3}
\Ind(\hat{A}^{(i)})=k\), it follows that $\Ind(B)=k$, and \(\operatorname{Ind}(\hat{A}^{(i)})\le k\) for each \(i\). By \cite[Proposition 3.1]{K2014}, for any 
$l\ge \max\limits_{1\le i\le n_3}\operatorname{Ind}(\hat{A}^{(i)})$, one has
\begin{align*}
(\hat{A}^{(i)})^D = (\hat{A}^{(i)})^l \big((\hat{A}^{(i)})^{2l+1}\big)^\dagger (\hat{A}^{(i)})^l.
\end{align*}
Applying the above result to each diagonal block of \eqref{FDF*} and assembling the resulting identities yields
\begin{align*}
F B^D F^* &= \operatorname{diag}\big((\hat{A}^{(1)})^l \big((\hat{A}^{(1)})^{2l+1}\big)^\dagger (\hat{A}^{(1)})^l,\dots,(\hat{A}^{(n_3)})^l \big((\hat{A}^{(n_3)})^{2l+1}\big)^\dagger (\hat{A}^{(n_3)})^l\big)\\&
= F\big(B^l(B^{2l+1})^\dagger B^l\big)F^*,
\end{align*}where $l\ge k$. Left multiplication by $F^*$ and right multiplication by $F$ yield \eqref{BDBL}. 
The desired identity follows by applying $\mathtt{bcirc_z}^{-1}$.
\end{proof}

Moreover, note that any $n\times n$ quaternion matrix admits a Jordan decomposition~\cite{Jordan form}. 
Motivated by this fact, we utilize the properties of $z$-block circulant matrices to extend this concept to quaternion tensors via the QT-product. 

\begin{lemma}[\textbf{QT-Jordan decomposition}]\label{pjp}
Let $\mathcal{A}\in\mathbb{Q}^{n\times n\times n_3}$. Then there exists an invertible quaternion tensor $\mathcal{P}\in\mathbb{Q}^{n\times n\times n_3}$ such that  
\[
\mathcal{A}=\mathcal{P} *_Q \mathcal{J} *_Q \mathcal{P}^{-1},
\]
where $\mathcal{J}$ denotes the QT-Jordan canonical form of $\mathcal{A}$.
\begin{proof}
    Let $A$ be an $n\times n$ quaternion matrix with distinct (right) eigenvalues, whose imaginary parts are nonnegative. Then there exists an invertible quaternion matrix $P$ such that $A=PJP^{-1}.$
    By utilizing Lemma \ref{newrelation} and Lemma \ref{exchangeinverse}, we obtain
    \begin{align*}
        \mathtt{bcirc_z}(\mathcal A)&=\mathtt{bcirc_z}(\mathcal P)\mathtt{bcirc_z}(\mathcal J)\mathtt{bcirc_z}(\mathcal P)^{-1}\\&=\mathtt{bcirc_z}(\mathcal P)\mathtt{bcirc_z}(\mathcal J)\mathtt{bcirc_z}(\mathcal P^{-1}),
    \end{align*} where $\mathtt{bcirc_z}(\mathcal P)$ is invertible and $\mathtt{bcirc_z}(\mathcal J)$ is an upper-triangular quaternion matrix and the main diagonal entries are $\lambda_1^{(i)},~\lambda_2^{(i)},\cdots, \lambda_t^{(i)}$ associated with $\mathtt{bcirc_z}(\mathcal A)$ and $t\leq nn_3$. Let $\mathcal{T}, \mathcal{J}$ be quaternion tensors corresponding to 
        $\mathtt{bcirc_z}(\mathcal{T})$ and $\mathtt{bcirc_z}(\mathcal{J}),$ respectively. Then, by Lemma \ref{newrelation} and Lemma \ref{exchangeinverse}, we obtain
    \begin{align*}
        \mathcal A=\mathcal P*_Q\mathcal J*_Q\mathcal P^{-1},
    \end{align*} where $\mathcal P$ is invertible and $\mathcal J$ is f-upper-triangular quaternion tensor. This completes the proof.
\end{proof}
\end{lemma}

Now, using the preceding results, we present the following decomposition of the QT-Drazin inverse of the quaternion tensor $\mathcal A$.
\begin{theorem}\label{Thm3.1}
Let $\mathcal A\in\mathbb{Q}^{n\times n\times n_3}$ with $Ind_{QT}(\mathcal A)=k$. Then there exists an invertible quaternion tensor $\mathcal{P}\in\mathbb{Q}^{n\times n\times n_3}$ such that the QT-Drazin inverse of $\mathcal A$ has the following form
$$\mathcal A^D=\mathcal P*_Q\mathcal J^D*_Q\mathcal P^{-1}.$$
\begin{proof}
    By Lemma \ref{newrelation} and Lemma \ref{pjp}, we have
    \begin{align}\label{yue11}
    \mathtt{bcirc_z}(\mathcal A)=\mathtt{bcirc_z}(\mathcal P)\mathtt{bcirc_z}(\mathcal J)\mathtt{bcirc_z}(\mathcal P^{-1}).
    \end{align}
Accordingly, by Lemma \ref{AKA-1}, we derive
\begin{align*}
    \mathtt{bcirc_z}(\mathcal A)^D=\mathtt{bcirc_z}(\mathcal P)\mathtt{bcirc_z}(\mathcal J)^D\mathtt{bcirc_z}(\mathcal P)^{-1}.
\end{align*}
Moreover, by Lemma \ref{AKA-1} and Lemma \ref{zad}, it follows that
\begin{align*}
    \mathtt{bcirc_z}(\mathcal A^D)=\mathtt{bcirc_z}(\mathcal P)\mathtt{bcirc_z}(\mathcal J^D)\mathtt{bcirc_z}(\mathcal P^{-1}).
\end{align*}
It can be directly verified that $\mathcal A^D=\mathcal P*_Q\mathcal J^D*_Q\mathcal{P}^{-1}$ by Lemma \ref{newrelation}.
\end{proof}
\end{theorem}

The following corollary is a direct consequence of Theorem \ref{Thm3.1}, and has also been discussed in \cite{PJOOma2024}.
\begin{corollary}
    Let $\mathcal A\in\mathbb{Q}^{n\times n\times n_3}$ with $\Ind_{QT}(\mathcal A)=1$. Then the QT-group inverse of $\mathcal A$ has the following form
    \begin{align*}
        \mathcal A^{\#}=\mathcal P*_Q\mathcal J^{\#}*_Q\mathcal P^{-1}.
    \end{align*}
\end{corollary}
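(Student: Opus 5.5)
The plan is to specialize Theorem \ref{Thm3.1} to the case $k=1$ and then identify the QT-Drazin inverse of index one with the QT-group inverse. With $\Ind_{QT}(\mathcal A)=1$, the defining equations of $\mathcal A^D$ become
\[
\mathcal A*_Q\mathcal X*_Q\mathcal A=\mathcal A,\qquad \mathcal X*_Q\mathcal A*_Q\mathcal X=\mathcal X,\qquad \mathcal A*_Q\mathcal X=\mathcal X*_Q\mathcal A .
\]
These are exactly the three equations defining the QT-group inverse $\mathcal A^{\#}$. By uniqueness, $\mathcal A^{\#}=\mathcal A^D$. Uniqueness can be taken from the matrix setting: Lemma \ref{newrelation} transports the equations to $\mathtt{bcirc_z}$, and the group inverse of a quaternion matrix is unique. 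So it suffices to show that the right-hand side $\mathcal P*_Q\mathcal J^D*_Q\mathcal P^{-1}$ supplied by Theorem \ref{Thm3.1} coincides with $\mathcal P*_Q\mathcal J^{\#}*_Q\mathcal P^{-1}$.

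For that, I need $\Ind_{QT}(\mathcal J)=1$, so that $\mathcal J^D=\mathcal J^{\#}$ exists in the group sense. I would argue through $\mathtt{bcirc_z}$. By Lemma \ref{pjp}, Lemma \ref{newrelation} and Lemma \ref{exchangeinverse}, $\mathtt{bcirc_z}(\mathcal J)=\mathtt{bcirc_z}(\mathcal P)^{-1}\mathtt{bcirc_z}(\mathcal A)\mathtt{bcirc_z}(\mathcal P)$, and matrix index is invariant under similarity. In the proof of Theorem \ref{indqtmax}, the QT-rank of $\mathcal A^m$ equals $\operatorname{rank}(\mathtt{bcirc_z}(\mathcal A)^m)$, and this holds for every tensor. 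Applying this to $\mathcal J$ together with Lemma \ref{AKA-1} gives $\Ind_{QT}(\mathcal J)=\Ind(\mathtt{bcirc_z}(\mathcal J))=\Ind(\mathtt{bcirc_z}(\mathcal A))=\Ind_{QT}(\mathcal A)=1$.

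With that in hand, the same uniqueness argument applied to $\mathcal J$ gives $\mathcal J^D=\mathcal J^{\#}$, and substituting into Theorem \ref{Thm3.1} yields $\mathcal A^{\#}=\mathcal P*_Q\mathcal J^{\#}*_Q\mathcal P^{-1}$.

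As an alternative that avoids computing the index of $\mathcal J$, I could verify directly that $\mathcal X=\mathcal P*_Q\mathcal J^{\#}*_Q\mathcal P^{-1}$ satisfies the three group-inverse equations, using associativity (Lemma \ref{rhos}) and the cancellation $\mathcal P^{-1}*_Q\mathcal P=\mathcal I$. This still presupposes that $\mathcal J^{\#}$ exists, however. The only real obstacle is therefore the index transfer from $\mathcal A$ to $\mathcal J$. I expect this to be routine through the $\mathtt{bcirc_z}$ similarity, and it is the step I would write out most carefully.
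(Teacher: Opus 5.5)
Your argument is correct and is essentially the paper's. The paper states the corollary as a direct consequence of Theorem \ref{Thm3.1} with $k=1$, where the QT-Drazin equations become the QT-group-inverse equations, and your explicit check that $\Ind_{QT}(\mathcal J)=1$ (so that $\mathcal J^{D}=\mathcal J^{\#}$) fills in a step the paper leaves implicit. For that step, a safer justification is blockwise: conjugating $\mathtt{bcirc_z}(\mathcal A)=\mathtt{bcirc_z}(\mathcal P)\mathtt{bcirc_z}(\mathcal J)\mathtt{bcirc_z}(\mathcal P)^{-1}$ by $F_{n_3}\otimes I_n$ gives $\hat A^{(i)}=\hat P^{(i)}\hat J^{(i)}(\hat P^{(i)})^{-1}$ for each $i$, so all powers of $\hat A^{(i)}$ and $\hat J^{(i)}$ have equal ranks and $\mathcal A$, $\mathcal J$ share their QT-index. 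This avoids identifying $\operatorname{rank}_{QT}$ with $\operatorname{rank}(\mathtt{bcirc_z}(\cdot))$, which Definition \ref{rankqt} (a count of nonzero singular tubes) does not literally give.
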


\section{Perturbation analysis of the QT-Drazin inverse}

In this section, the perturbation analysis of the QT-Drazin inverse of quaternion tensors under the QT-product is conducted, and the corresponding perturbation bounds are obtained. Before proceeding to the main results, a proposition and several lemmas are introduced.
\begin{proposition}\label{yz1}
Let $A \in \mathbb{Q}^{n \times n}$
and let $\lambda$ is a right eigenvalue of $A$. If $\rho(A)<1$, then $I+A$ is invertible.
\end{proposition}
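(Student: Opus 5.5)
We argue by contradiction through the right eigenvalue structure of $A$. Suppose $I+A$ is not invertible. Over $\mathbb{Q}$ a square matrix is invertible if and only if its null space is trivial; this follows, for instance, from the complex adjoint representation $\chi_A\in\mathbb{C}^{2n\times 2n}$, which satisfies $\chi_{I+A}=I_{2n}+\chi_A$, and $A$ is invertible exactly when $\chi_A$ is. Hence there is a nonzero $x\in\mathbb{Q}^n$ with $(I+A)x=0$, that is,
\[
Ax=-x=x(-1).
\]
So $\lambda=-1$ is a right eigenvalue of $A$, with right eigenvector $x$.

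Next we compare this with the spectral radius. For a quaternion matrix, $\rho(A)$ is the largest modulus $|\lambda|$ over all right eigenvalues $\lambda$ of $A$. This is well defined because right eigenvalues occur in similarity classes $\{q^{-1}\lambda q\}$ that share the same modulus, and these moduli coincide with those of the eigenvalues of $\chi_A$. Since $-1$ is a right eigenvalue, $\rho(A)\ge|-1|=1$, which contradicts the hypothesis $\rho(A)<1$. Therefore $I+A$ is invertible.

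The only step needing care is the justification that a singular quaternion matrix has a nontrivial kernel, and that $\rho$ is defined consistently via right eigenvalues. I would handle both at once through the complex representation. Write $A=A_{\mathbf d}+\mathbf j A_{\mathbf c}$ and
\[
\chi_A=\begin{bmatrix}A_{\mathbf d}&-\overline{A_{\mathbf c}}\\ A_{\mathbf c}&\overline{A_{\mathbf d}}\end{bmatrix}.
\]
Then $\chi$ is an injective real-algebra homomorphism, so $\chi_{AB}=\chi_A\chi_B$ and $\chi_{I}=I_{2n}$. The right eigenvalues of $A$, taken up to similarity, correspond to the eigenvalues of $\chi_A$ appearing in conjugate pairs. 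Hence $\rho(A)=\rho(\chi_A)$.

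With this in hand there is an alternative direct route. Since $\rho(\chi_A)<1$, the number $-1$ is not an eigenvalue of $\chi_A$, so $I_{2n}+\chi_A=\chi_{I+A}$ is invertible. Let $Y$ be the quaternion matrix with $\chi_Y=(\chi_{I+A})^{-1}$; it exists because the inverse of a matrix in the image of $\chi$ stays in that image, the image being closed under the block structure. Then $\chi_{(I+A)Y}=I_{2n}$, and injectivity of $\chi$ gives $(I+A)Y=I$. For the write-up I would present the short contradiction argument above and cite \cite{LAAzfzl1997} for the facts about right eigenvalues.
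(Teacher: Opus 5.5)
Your proof is correct and uses the same idea as the paper: a right eigenvector of $A$ for $\lambda$ is a right eigenvector of $I+A$ for $1+\lambda$, and $\rho(A)<1$ rules out $1+\lambda=0$. Your argument runs in the contrapositive direction, where a kernel vector of $I+A$ yields the right eigenvalue $-1$ of $A$, and you use $\chi_A$ to show that a singular quaternion matrix has a nontrivial kernel; this makes rigorous the step the paper compresses into the unexplained assertion $\det(I+A)\neq 0$.
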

\begin{proof}
Suppose there exists a nonzero vector $x$ such that 
\[
Ax= x\lambda.
\]
Then,
\[
(I + A)x = Ix + Ax = x + x\lambda  = x(1+\lambda).
\]
Since $\rho(A)<1$, we have $1+\lambda\neq0$, i.e., $det(I+A)\neq0$, i.e., $I+A$ is invertible.
\end{proof}

\begin{lemma}\label{yz2}
Let $\mathcal I\in\mathbb{Q}^{n\times n\times n_3}$ be the identity quaternion tensor. Then
\begin{align*}
    \mathcal I+\mathcal A ~\text{is invertible} \Longleftrightarrow I_{nnn_3}+\mathtt{bcirc_z}(\mathcal A) ~\text{is invertible}
    \end{align*}
for any $\mathcal A\in\mathbb{Q}^{n\times n\times n_3}$.
    \begin{proof}
    By Lemma \ref{exchangeinverse}, we have $\mathcal I+\mathcal A~\text{is invertible} \Longleftrightarrow \mathtt{bcirc_z}(\mathcal I+\mathcal A)~\text{is invertible}.$ Since $$\mathtt{bcirc_z}(\mathcal I+\mathcal A)=\mathtt{bcirc_z}(\mathcal I)+\mathtt{bcirc_z}(\mathcal A)=I_{nnn_3}+\mathtt{bcirc_z}(\mathcal A),$$ then the desired result is obtained.
    \end{proof}
\end{lemma}

\begin{lemma}\label{ABBARHO}
For any $\mathcal A, \mathcal B\in\mathbb{Q}^{n\times n\times n_3}$,
\begin{align*}
    0<\scalebox{1.3}{$\rho$}_{QT}(\mathcal A*_Q \mathcal{B})<1 \Longleftrightarrow 0<\scalebox{1.3}{$\rho$}_{QT}(\mathcal{B}*_Q \mathcal{A})<1.
\end{align*}
\begin{proof}
    Assume that $\lambda\neq0$ is the right eigenvalue of $\mathtt{bcirc_z}(\mathcal A)\mathtt{bcirc_z}(\mathcal{B})$, then there exists eigenvector $x\neq0$ such that 
    \begin{align}\label{eigen}
    \Big(\mathtt{bcirc_z}(\mathcal A)\mathtt{bcirc_z}(\mathcal{B})\Big)x=x\lambda.
    \end{align}
    Notice that $\mathtt{bcirc_z}(\mathcal B)x\neq0$, left-multiplying equation \eqref{eigen} by $\mathtt{bcirc_z}(\mathcal B)$ yields
    \begin{align*}
    \mathtt{bcirc_z}(\mathcal B)\mathtt{bcirc_z}(\mathcal A)\Big(\mathtt{bcirc_z}(\mathcal B)x\Big)&=\mathtt{bcirc_z}(\mathcal B)\Big(\mathtt{bcirc_z}(\mathcal A)\mathtt{bcirc_z}(\mathcal B)x\Big)\\&=\mathtt{bcirc_z}(\mathcal B)(x\lambda)\\&=(\mathtt{bcirc_z}(\mathcal B)x)\lambda.
    \end{align*}
    Thus, $\mathtt{bcirc_z}(\mathcal B)x$ is the eigenvector of $\mathtt{bcirc_z}(\mathcal B)\mathtt{bcirc_z}(\mathcal A),$ and $\mathtt{bcirc_z}(\mathcal A)\mathtt{bcirc_z}(\mathcal B)$ has the same nonzero eigenvalues as $\mathtt{bcirc_z}(\mathcal B)\mathtt{bcirc_z}(\mathcal A).$
    In addition, utilizing Definition \ref{defrhoqt} and Lemma \ref{newrelation}, we obtain
    \begin{align*}
    0<\scalebox{1.3}{$\rho$}_{QT}(\mathcal A*_Q \mathcal{B})<1\Longleftrightarrow 0<\scalebox{1.3}{$\rho$}\Big(\mathtt{bcirc_z}(\mathcal A)\mathtt{bcirc_z}(\mathcal{B})\Big)<1
    \end{align*} and 
    \begin{align*}
    0<\scalebox{1.3}{$\rho$}_{QT}(\mathcal B*_Q \mathcal{A})<1\Longleftrightarrow 0<\scalebox{1.3}{$\rho$}\Big(\mathtt{bcirc_z}(\mathcal B)\mathtt{bcirc_z}(\mathcal{A})\Big)<1.
    \end{align*}
    Therefore, we finish the proof.
\end{proof}
\end{lemma}

Subsequently, we provide the following theorem on the perturbation theory of quaternion tensors.
\begin{theorem}\label{Thm3.2}
    Let $\mathcal A, \mathcal B, \mathcal E\in\mathbb{Q}^{n\times n\times n_3}$, and let $\mathcal A^D$ denote the QT-Drazin inverse of $\mathcal A$ with $Ind_{QT}(\mathcal A)=k$. Suppose that $\mathcal E=\mathcal A*_Q\mathcal A^D*_Q\mathcal E*_Q\mathcal A*_Q\mathcal A^D$, and define $\mathcal B=\mathcal A+\mathcal E$. If the QT spectral radius satisfies $0<\scalebox{1.3}{$\rho$}_{QT}(\mathcal{A}^D*_Q \mathcal{E})<1$, then
\begin{align*}
    \mathcal A*_Q\mathcal A^D&=\mathcal B*_Q\mathcal B^D,\\
    \mathcal B^D-\mathcal A^D&=-\mathcal B^D*_Q\mathcal E*_Q\mathcal A^D=-\mathcal A^D*_Q\mathcal E*_Q\mathcal B^D,\\
    \mathcal B^D&=(\mathcal I+\mathcal A^D*_Q\mathcal E)^{-1}*_Q\mathcal A^D=\mathcal A^D*_Q(\mathcal I+\mathcal E*_Q\mathcal A^D)^{-1}.
\end{align*}
\begin{proof}
    By assumption, we have 
    \[
    \begin{cases}
        \mathcal B=\mathcal A+\mathcal E=\mathcal A+\mathcal A*_Q\mathcal A^D*_Q\mathcal E=\mathcal A*_Q(\mathcal I+\mathcal A^D*_Q\mathcal E),\\
        \mathcal B=\mathcal A+\mathcal E=\mathcal A+\mathcal E*_Q\mathcal A*_Q\mathcal A^D=(\mathcal I+\mathcal E*_Q\mathcal A^D)*_Q\mathcal A.
    \end{cases}
    \]
As $0<\scalebox{1.3}{$\rho$}_{QT}(\mathcal{A}^D *_Q \mathcal{E}) < 1$, we combine Proposition \ref{yz1} and Lemma \ref{yz2} to obtain that $\mathcal I + \mathcal A^D *_Q \mathcal E$ is nonsingular. Similarly, it can be verified that $\mathcal I + \mathcal E *_Q \mathcal A^D$ is also nonsingular by Lemma \ref{ABBARHO}.

By Lemma~\ref{pjp} and Theorem~\ref{Thm3.1}, $\mathcal A$ and $\mathcal A^D$ can be expressed as
\[
\mathcal A = \mathcal P *_Q \mathcal J *_Q \mathcal P^{-1}, 
\qquad 
\mathcal A^D = \mathcal P *_Q \mathcal J^D *_Q \mathcal P^{-1},
\]
where $\mathcal J$ denotes the F-upper-triangular quaternion tensor. 
The diagonal entries of $\hat{J}_{(i)}$ are given by the set 
$\{\lambda_{(i)}^1, \lambda_{(i)}^2, \ldots, \lambda_{(i)}^s\}$ ($s \leq n$), 
which correspond to the distinct right eigenvalues of $\hat{A}_{(i)}$.
Furthermore, we utilize Lemma \ref{zdc} to obtain
\begin{align}\label{FJF}
    &(F_{n_3}\otimes I_n)\mathtt{bcirc_z}(\mathcal J)(F_{n_3}^*\otimes I_n)=\begin{bmatrix}
    \hat{J}_{(1)}&&&\\&\hat{J}_{(2)}&&\\ &&\ddots&\\&&&\hat{J}_{(n_3)}
\end{bmatrix},
\end{align}
where the $i$-th entry of $diag(\hat{\mathcal J})$  $:=\hat{J}_{(i)}=\begin{bmatrix}
        \hat{C}_{(i)}&O\\O&O
\end{bmatrix}$ for $i=1,2,\cdots,n_3$.

In addition, applying Lemma \ref{zad} and Lemma \ref{zdc}, we have

\begin{align}\label{FJDF}
    &(F_{n_3}\otimes I_n)\mathtt{bcirc_z}(\mathcal J^D)(F_{n_3}^*\otimes I_n)\\\notag&=(F_{n_3}\otimes I_n)\mathtt{bcirc_z}(\mathcal J)^D(F_{n_3}^*\otimes I_n)=\begin{bmatrix}
\hat{J}_{(1)}^D&&&\\&\hat{J}_{(2)}^D&&\\ &&\ddots&\\&&&\hat{J}_{(n_3)}^D
\end{bmatrix},
\end{align}
where the $i$-th entry of $diag(\hat{\mathcal J}^D)$$:=\hat{J}_{(i)}^D=\begin{bmatrix}
        \hat{C}_{(i)}^{-1}&O\\O&O
    \end{bmatrix}$ for $i=1,2,\cdots,n_3$.
    
Notice that $\mathcal A*_Q\mathcal A^D=\mathcal P*_Q\mathcal J*_Q\mathcal J^D*_Q\mathcal P^{-1}$. Meanwhile, let $\mathcal E=\mathcal P*_Q\mathcal G*_Q\mathcal P^{-1}$. By Lemma \ref{newrelation}, we derive
\[
\begin{cases}
    \mathtt{bcirc_z}(\mathcal A*_Q\mathcal{A^D})=\mathtt{bcirc_z}(\mathcal A)\mathtt{bcirc_z}(\mathcal A^D)=\mathtt{bcirc_z}(\mathcal P)\mathtt{bcirc_z}(\mathcal J)\mathtt{bcirc_z}(\mathcal J^D)\mathtt{bcirc_z}(\mathcal P^{-1}),\\
    \mathtt{bcirc_z}(\mathcal E)=\mathtt{bcirc_z}(\mathcal P)\mathtt{bcirc_z}(\mathcal G)\mathtt{bcirc_z}(\mathcal P^{-1}).
\end{cases}
\]
Since
$\mathcal E=\mathcal A*_Q\mathcal A^D*_Q\mathcal E*_Q\mathcal A*_Q\mathcal A^D$, 
we obtain
$$\mathtt{bcirc_z}(\mathcal E)=\mathtt{bcirc_z}(\mathcal P)\mathtt{bcirc_z}(\mathcal J)\mathtt{bcirc_z}(\mathcal J^D)\mathtt{bcirc_z}(\mathcal G)\mathtt{bcirc_z}(\mathcal J)\mathtt{bcirc_z}(\mathcal J^D)\mathtt{bcirc_z}(\mathcal P^{-1}).$$
Therefore, by Lemma \ref{exchangeinverse} and Lemma \ref{AKA-1}, we conclude that
$$\mathtt{bcirc_z}(\mathcal G)=\mathtt{bcirc_z}(\mathcal J)\mathtt{bcirc_z}(\mathcal J^D)\mathtt{bcirc_z}(\mathcal G)\mathtt{bcirc_z}(\mathcal J)\mathtt{bcirc_z}(\mathcal J^D),$$ 
and 
\begin{align}\label{zdiagonal}
&(F_{n_3}\otimes I_n)\mathtt{bcirc_z}(\mathcal G)(F_{n_3}^*\otimes I_n)\\\notag&=(F_{n_3}\otimes I_n)\mathtt{bcirc_z}(\mathcal J)\mathtt{bcirc_z}(\mathcal J^D)\mathtt{bcirc_z}(\mathcal G)\mathtt{bcirc_z}(\mathcal J)\mathtt{bcirc_z}(\mathcal J^D)(F_{n_3}^*\otimes I_n).
\end{align}
By Lemma \ref{zdc}, one observes that
\begin{align}\label{FEF}
(F_{n_3}\otimes I_n)\mathtt{bcirc_z}(\mathcal G)(F_{n_3}^*\otimes I_n)=\begin{bmatrix}
    \hat{G}_{(1)}&&&\\&\hat{G}_{(2)}&&\\ &&\ddots&\\&&&\hat{G}_{(n_3)}
\end{bmatrix},
\end{align}
where the $i$-th entry of $diag(\hat{\mathcal G})$$:=\hat{G}_{(i)}=\begin{bmatrix}
    G_{(i)}^{11}&G_{(i)}^{12}\\G_{(i)}^{21}&G_{(i)}^{22}
\end{bmatrix}, i=1,2,\cdots,n_3.$

Moreover, substituting \eqref{FJF}, \eqref{FJDF}, and \eqref{FEF} into \eqref{zdiagonal}, we obtain $\hat{G}_{(i)}=\hat{J}_{(i)}\hat{J}^D_{(i)}\hat{G}_{(i)}\hat{J}_{(i)}\hat{J}^D_{(i)}$. Hence, it directly follows that
\begin{align*}
   \hat{G}_{(i)}= \begin{bmatrix}
    G_{(i)}^{11}&G_{(i)}^{12}\\G_{(i)}^{21}&G_{(i)}^{22}
    \end{bmatrix}=\begin{bmatrix}
        \hat{C}_{(i)}&O\\O&O
\end{bmatrix}\begin{bmatrix}
        \hat{C}_{(i)}^{-1}&O\\O&O
\end{bmatrix} \begin{bmatrix}
    G_{(i)}^{11}&G_{(i)}^{12}\\G_{(i)}^{21}&G_{(i)}^{22}
    \end{bmatrix}\begin{bmatrix}
        \hat{C}_{(i)}&O\\O&O
\end{bmatrix}\begin{bmatrix}
        \hat{C}_{(i)}^{-1}&O\\O&O
\end{bmatrix}=\begin{bmatrix}
    G_{(i)}^{11}&O\\O&O
    \end{bmatrix}.
\end{align*}
Since $\mathcal B=\mathcal A+\mathcal E=\mathcal P*_Q(\mathcal J+\mathcal G)*_Q\mathcal P^{-1}$, we obtain
$$\mathtt{bcirc_z}(\mathcal B)=\mathtt{bcirc_z}(\mathcal P)\mathtt{bcirc_z}(\mathcal J+\mathcal G)\mathtt{bcirc_z}(\hat{\mathcal P}^{-1}).$$ Furthermore, from \eqref{FJF} and \eqref{FEF}, we can easily derive 
\begin{align}\label{JE}
    (F_{n_3}\otimes I_n)\mathtt{bcirc_z}(\mathcal J+\mathcal G)(F_{n_3}^*\otimes I_n)=\begin{bmatrix}
    \hat{J}_{(1)}+\hat{G}_{(1)}& & &\\
    & \hat{J}_{(2)}+\hat{G}_{(2)} & & \\
    & & \ddots &\\
    & & & \hat{J}_{(n_3)}+\hat{G}_{(n_3)}
\end{bmatrix},
\end{align}
where $\hat{J}_{(i)}+\hat{G}_{(i)}=\begin{bmatrix}
    \hat{C}_{(i)}+{G}_{(i)}^{11}&O\\O&O
\end{bmatrix},$ for $i=1,2,\cdots,n_3.$

By Lemma~\ref{exchangeinverse}, the invertibility of $\mathcal I + \mathcal A^D *_Q \mathcal E$ ensures that $\mathtt{bcirc_z}(\mathcal I + \mathcal A^D *_Q \mathcal E)$ is invertible. It is then straightforward to see that
\begin{align*}
    \mathtt{bcirc_z}(\mathcal I+\mathcal A^D*_Q\mathcal E)
    &= \mathtt{bcirc_z}(\mathcal I)+ \mathtt{bcirc_z}({\mathcal A}^D)\mathtt{bcirc_z}(\mathcal E)
    \\&=\mathtt{bcirc_z}(\mathcal P)\mathtt{bcirc_z}(\mathcal I)\mathtt{bcirc_z}(\mathcal P^{-1})
    \\&+\mathtt{bcirc_z}(\mathcal P)\mathtt{bcirc_z}(\mathcal J^D)\mathtt{bcirc_z}(\mathcal P^{-1})\mathtt{bcirc_z}(\mathcal P)\mathtt{bcirc_z}(\mathcal G)\mathtt{bcirc_z}(\mathcal P^{-1})
    \\&=\mathtt{bcirc_z}(\mathcal P)\Big(\mathtt{bcirc_z}(\mathcal I)+\mathtt{bcirc_z}(\mathcal J^D)\mathtt{bcirc_z}(\mathcal G\Big)\mathtt{bcirc_z}(\mathcal P^{-1}),
\end{align*}
and thus $\mathtt{bcirc_z}(\mathcal I)+\mathtt{bcirc_z}(\mathcal J^D)\mathtt{bcirc_z}(\mathcal G)$ is invertible and we expand it as follows:
\begin{align}\label{zhao1}
(F_{n_3}^*\otimes I_n)\begin{bmatrix}
\begin{pmatrix}
    \hat{I}_{r_1}+(\hat{C}_{(1)})^{-1}G_{(1)}^{11}&O\\O&\hat{I}_{n-r_1}
    \end{pmatrix}& & \\
    &\ddots &\\
    & &\begin{pmatrix}
    \hat{I}_{r_{n_3}}+(\hat{C}_{(n_3)})^{-1}G_{(n_3)}^{11}&O\\O&\hat{I}_{n-r_{n_3}}
    \end{pmatrix}
\end{bmatrix}(F_{n_3}\otimes I_n),
\end{align}
where $r_{i}$ is the number of non-zero eigenvalues of $\hat{A}_{(i)}$, $i=1,2,\cdots,n_3.$ Observe that $\hat{I}_{r_i}+(\hat{C}_{(i)})^{-1}G_{(i)}^{11}$ can be rewritten as $(\hat{C}_{(i)})^{-1}(\hat{C}_{(i)}+G_{(i)}^{11})$, then we can split the diagonal blocks of \eqref{zhao1} as follows:\begin{align*}
    \begin{pmatrix}
    \hat{I}_{r_{i}}+(\hat{C}_{(i)})^{-1}G_{(i)}^{11}&O\\O&\hat{I}_{n-r_{i}}
    \end{pmatrix}=\begin{pmatrix}
    (\hat{C}_{(i)})^{-1}&O\\O&\hat{I}_{n-r_{i}}
    \end{pmatrix}\begin{pmatrix}
    \hat{C}_{(i)}+G_{(i)}^{11}&O\\O&\hat{I}_{n-r_{i}}
    \end{pmatrix}.
\end{align*}
Therefore, we conclude that $\hat{C}_{(i)}+G_{(i)}^{11}$ is invertible. From \eqref{JE}, we obtain
\begin{align}\label{JED}
&(F_{n_3}\otimes I_n)\mathtt{bcirc_z}\big((\mathcal J+\mathcal G)^D\big)(F_{n_3}^*\otimes I_n)\\\notag&=\begin{bmatrix}
\begin{pmatrix}
    (\hat{C}_{(1)}+G_{(1)}^{11})^{-1}&O\\O&O
\end{pmatrix}& & \\
    &\ddots &\\
    & &\begin{pmatrix}
    (\hat{C}_{(n_3)}+G_{(n_3)}^{11})^{-1}&O\\O&O
\end{pmatrix}
\end{bmatrix}.
\end{align}
Furthermore, we obtain
\begin{align*}
    &\mathtt{bcirc_z}(\mathcal B)\mathtt{bcirc_z}(\mathcal B^D)
    \\&=\mathtt{bcirc_z}(\mathcal P)\mathtt{bcirc_z}(\mathcal J+\mathcal G)\mathtt{bcirc_z}\big((\mathcal J+\mathcal G)^D\big)\mathtt{bcirc_z}(\mathcal P^{-1})
    \\&=\mathtt{bcirc_z}(\mathcal P)\mathtt{bcirc_z}(\mathcal J)\mathtt{bcirc_z}({\mathcal J^D})\mathtt{bcirc_z}(\hat{\mathcal P}^{-1})
    \\&=\mathtt{bcirc_z}(\mathcal P)\mathtt{bcirc_z}(\mathcal J)\mathtt{bcirc_z}(\mathcal P^{-1})\mathtt{bcirc_z}(\mathcal P)\mathtt{bcirc_z}(\mathcal J^D)\mathtt{bcirc_z}(\mathcal P^{-1})
    \\&=\mathtt{bcirc_z}(\mathcal A)\mathtt{bcirc_z}(\mathcal A^D).
\end{align*}
In particular, from \eqref{JE} and \eqref{JED}, we obtain
\begin{align*}
    &(F_{n_3}\otimes I_n)\mathtt{bcirc_z}(\mathcal J+\mathcal G)\mathtt{bcirc_z}\big((\mathcal J+\mathcal G)^D\big)(F_{n_3}^*\otimes I_n)
    \\&=\begin{bmatrix}
\begin{pmatrix}
    (\hat{C}_{(1)}+G_{(1)}^{11})(\hat{C}_{(1)}+G_{(1)}^{11})^{-1}&O\\O&O
\end{pmatrix}& & \\
    &\ddots &\\
    & &\begin{pmatrix}
    (\hat{C}_{(n_3)}+G_{(n_3)}^{11})(\hat{C}_{(n_3)}+G_{(n_3)}^{11})^{-1}&O\\O&O
\end{pmatrix}
\end{bmatrix}
    \\&=\begin{bmatrix}
\begin{pmatrix}
    I&O\\O&O
\end{pmatrix}& & \\
    &\ddots &\\
    & &\begin{pmatrix}
    I&O\\O&O
\end{pmatrix}
\end{bmatrix}=(F_{n_3}\otimes I_n)\mathtt{bcirc_z}(\mathcal J)\mathtt{bcirc_z}(\mathcal J^D)(F_{n_3}^*\otimes I_n),
\end{align*} and hence the second equality holds.

Therefore, by Lemma \ref{newrelation}, we conclude that $\mathcal A*_Q\mathcal A^D=\mathcal B*_Q\mathcal B^D$.

Moreover, it immediately implies that
\begin{align*}
    \mathcal B^D-\mathcal A^D=\mathcal B^D*_Q\mathcal A*_Q\mathcal A^D-\mathcal B^D*_Q\mathcal B*_Q\mathcal A^D
    =-\mathcal B^D*_Q\mathcal E*_Q\mathcal A^D.
\end{align*}
Analogously, we obtain
\begin{align*}
    \mathcal B^D-\mathcal A^D=\mathcal A*_Q\mathcal A^D*_Q\mathcal B^D-\mathcal A^D*_Q\mathcal B*_Q\mathcal B^D
    =-\mathcal A^D*_Q\mathcal E*_Q\mathcal B^D.
\end{align*}
Consequently, we have 
\begin{align*}
    \mathcal B^D*_Q(\mathcal I+\mathcal E*_Q\mathcal A^D)=\mathcal A^D=
    (\mathcal I+\mathcal A^D*_Q\mathcal E)*_Q\mathcal B^D.
\end{align*}
Recall that $\mathcal I+\mathcal A^D*_Q\mathcal E$ and $\mathcal I+\mathcal E*_Q\mathcal A^D$ are nonsingular, we obtain
$$\mathcal B^D=(\mathcal I+\mathcal A^D*_Q\mathcal E)^{-1}*_Q\mathcal A^D=\mathcal A^D*_Q(\mathcal I+\mathcal E*_Q\mathcal A^D)^{-1}.$$
Therefore, the proof is finished.
\end{proof}
\end{theorem}

In addition, by Lemma \ref{rhos}, $\|\mathcal A^D*_Q\mathcal E\|_s<1$ implies $\scalebox{1.3}{$\rho$}_{QT}(\mathcal{A}^D*_Q \mathcal{E})<1$. 
Hence, the next corollary follows immediately from Theorem \ref{Thm3.2}.
\begin{corollary}\label{cor4.5!}
    Let $\mathcal A, \mathcal B, \mathcal E\in\mathbb{Q}^{n\times n\times n_3}$, and let $\mathcal A^D$ denote the QT-Drazin inverse of $\mathcal A$ with $Ind_{QT}(\mathcal A)=k$. Suppose that $\mathcal E=\mathcal A*_Q\mathcal A^D*_Q\mathcal E*_Q\mathcal A*_Q\mathcal A^D$, and define $\mathcal B=\mathcal A+\mathcal E$. If the QT spectral radius satisfies $\|\mathcal A^D*_Q\mathcal E\|_s<1$, then
\begin{align*}
    \mathcal A*_Q\mathcal A^D&=\mathcal B*_Q\mathcal B^D,\\
    \mathcal B^D-\mathcal A^D&=-\mathcal B^D*_Q\mathcal E*_Q\mathcal A^D=-\mathcal A^D*_Q\mathcal E*_Q\mathcal B^D,\\
    \mathcal B^D&=(\mathcal I+\mathcal A^D*_Q\mathcal E)^{-1}*_Q\mathcal A^D=\mathcal A^D*_Q(\mathcal I+\mathcal E*_Q\mathcal A^D)^{-1}.
\end{align*}
\end{corollary}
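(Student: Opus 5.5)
The plan is to reduce Corollary \ref{cor4.5!} to Theorem \ref{Thm3.2}. The one gap is that Theorem \ref{Thm3.2} assumes $0<\rho_{QT}(\mathcal A^D*_Q\mathcal E)<1$, whereas $\|\mathcal A^D*_Q\mathcal E\|_s<1$ gives only $\rho_{QT}(\mathcal A^D*_Q\mathcal E)<1$ (Lemma \ref{rhos}(1)). The spectral radius may vanish, for instance when $\mathcal A^D*_Q\mathcal E$ is QT-nilpotent.

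In the proof of Theorem \ref{Thm3.2}, the spectral-radius hypothesis is used in exactly one place: to guarantee that $\mathcal I+\mathcal A^D*_Q\mathcal E$ and $\mathcal I+\mathcal E*_Q\mathcal A^D$ are invertible. Everything after that depends only on these two invertibility facts together with $\mathcal E=\mathcal A*_Q\mathcal A^D*_Q\mathcal E*_Q\mathcal A*_Q\mathcal A^D$. This covers the QT-Jordan reduction (Lemma \ref{pjp}, Theorem \ref{Thm3.1}), the block form $\hat G_{(i)}=\mathrm{diag}(G^{11}_{(i)},O)$, the invertibility of $\hat C_{(i)}+G^{11}_{(i)}$, the identity $\mathcal A*_Q\mathcal A^D=\mathcal B*_Q\mathcal B^D$, and the algebraic derivation of the last two formulas. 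So I will re-establish the two invertibility facts directly from the norm hypothesis and then invoke that argument verbatim.

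For the first fact, set $X=\mathtt{bcirc_z}(\mathcal A^D*_Q\mathcal E)$, so that $\|X\|_2=\|\mathcal A^D*_Q\mathcal E\|_s<1$ by Definition \ref{QTSPECTRAL}. The Neumann series $\sum_{m\ge0}(-X)^m$ converges in the quaternion matrix spectral norm, since the norm is submultiplicative and complete. Its sum inverts $I+X$. By Lemma \ref{yz2} (equivalently Lemma \ref{exchangeinverse}), $\mathcal I+\mathcal A^D*_Q\mathcal E$ is then invertible. Alternatively, $\rho(X)\le\|X\|_2<1$ together with Proposition \ref{yz1} gives the same conclusion, and that proposition does not need $\rho>0$.

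The second fact is where Lemma \ref{ABBARHO} cannot be used, because of its strict lower bound; this is the step I regard as the main obstacle. Instead I will use the elementary identity valid for any square quaternion matrices $U,V$ such that $I+UV$ is invertible:
\[
(I+VU)^{-1}=I-V(I+UV)^{-1}U .
\]
This can be checked by direct multiplication, which uses only associativity and so holds over $\mathbb Q$. Applying it with $U=\mathtt{bcirc_z}(\mathcal A^D)$ and $V=\mathtt{bcirc_z}(\mathcal E)$, and using Lemma \ref{newrelation}, shows that $I+\mathtt{bcirc_z}(\mathcal E*_Q\mathcal A^D)$ is invertible. Lemma \ref{yz2} then shows that $\mathcal I+\mathcal E*_Q\mathcal A^D$ is invertible.

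With both inverses in hand, I will repeat the proof of Theorem \ref{Thm3.2} from the factorizations $\mathcal B=\mathcal A*_Q(\mathcal I+\mathcal A^D*_Q\mathcal E)=(\mathcal I+\mathcal E*_Q\mathcal A^D)*_Q\mathcal A$ onward. This yields the three displayed identities. In particular, $\mathcal B^D-\mathcal A^D=-\mathcal B^D*_Q\mathcal E*_Q\mathcal A^D=-\mathcal A^D*_Q\mathcal E*_Q\mathcal B^D$ follows from $\mathcal A*_Q\mathcal A^D=\mathcal B*_Q\mathcal B^D$, and solving with the two inverses gives the explicit formulas for $\mathcal B^D$.
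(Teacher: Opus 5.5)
Your proposal is correct. It follows the same reduction to Theorem~\ref{Thm3.2} as the paper, but it is more careful than the paper.

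The paper's proof is a single remark. It notes that $\|\mathcal A^D*_Q\mathcal E\|_s<1$ implies $\rho_{QT}(\mathcal A^D*_Q\mathcal E)<1$ by Lemma~\ref{rhos}, and then cites Theorem~\ref{Thm3.2} as a black box. That drops the theorem's hypothesis $0<\rho_{QT}(\mathcal A^D*_Q\mathcal E)$ without comment. So the case $\rho_{QT}(\mathcal A^D*_Q\mathcal E)=0$ is not covered, even though the corollary's hypothesis allows it (e.g.\ $\mathcal E=0$, or $\mathcal A^D*_Q\mathcal E$ QT-nilpotent).

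You instead reopen the proof of Theorem~\ref{Thm3.2} and correctly observe that the spectral-radius assumption is used only to make $\mathcal I+\mathcal A^D*_Q\mathcal E$ and $\mathcal I+\mathcal E*_Q\mathcal A^D$ invertible. You then get both facts directly from the norm bound:
\begin{itemize}
\item the first by a Neumann series, or by Proposition~\ref{yz1}, which needs no lower bound on $\rho$;
\item the second by the identity $(I+VU)^{-1}=I-V(I+UV)^{-1}U$, transported through Lemmas~\ref{newrelation} and~\ref{yz2}.
\end{itemize}
This makes your argument complete where the paper's one-line deduction is not.

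Two small remarks.

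First, the step you flag as the main obstacle is milder than it looks. The proof of Lemma~\ref{ABBARHO} shows that $UV$ and $VU$ have the same nonzero right eigenvalues. Hence $\rho(UV)=\rho(VU)$ holds unconditionally, and Proposition~\ref{yz1} would already finish. Your identity is still the better choice, since it is purely algebraic and avoids quaternion eigenvalue theory.

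Second, you say you will rerun the QT-Jordan argument verbatim, so note one error in it. For $k\ge 2$, the lower-right block of $\hat J_{(i)}$ is a nilpotent block $N_{(i)}$, not $O$ as the paper writes. Nothing breaks:
\begin{itemize}
\item $\hat J_{(i)}\hat J_{(i)}^D=\mathrm{diag}(I,O)$ still forces $\hat G_{(i)}=\mathrm{diag}(G^{11}_{(i)},O)$;
\item $\mathrm{diag}(\hat C_{(i)}+G^{11}_{(i)},N_{(i)})^D=\mathrm{diag}\big((\hat C_{(i)}+G^{11}_{(i)})^{-1},O\big)$.
\end{itemize}
So $\mathcal B*_Q\mathcal B^D=\mathcal A*_Q\mathcal A^D$ and the remaining identities follow as you describe.
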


\begin{corollary}\label{pertu}
    Let $\mathcal A, \mathcal B, \mathcal E\in\mathbb{Q}^{n\times n\times n_3}$, and let $\mathcal A^D$ denote the QT-Drazin inverse of $\mathcal A$ with $Ind_{QT}(\mathcal A)=k$. Suppose that $\mathcal E=\mathcal A*_Q\mathcal A^D*_Q\mathcal E*_Q\mathcal A*_Q\mathcal A^D$, and define $\mathcal B=\mathcal A+\mathcal E$. If the QT spectral radius satisfies $0<\scalebox{1.3}{$\rho$}_{QT}(\mathcal{A}^D*_Q \mathcal{E})<1$ and the following condition $$\Delta:~\|(\mathcal I+\mathcal A^D*_Q\mathcal E)^{-1}\|_s\leq\frac{1}{1-\|\mathcal A^D*_Q\mathcal E\|_s}$$ holds, then
\begin{align*}
    &\frac{\|\mathcal A^D\|_s}{1+\|\mathcal A^D*_Q\mathcal E\|_s}~(\text{or}~ \frac{\|\mathcal A^D\|_s}{1+\|\mathcal E*_Q\mathcal A^D\|_s})\leq\|\mathcal B^D\|_s\leq\frac{\|\mathcal A^D\|_s}{1-\|\mathcal A^D*_Q\mathcal E\|_s}~(\text{or}~ \frac{\|\mathcal A^D\|_s}{1-\|\mathcal E*_Q\mathcal A^D\|_s}),\\
    &\frac{\|\mathcal B^D-\mathcal A^D\|_s}{\|\mathcal A^D\|_s}\leq\frac{\|\mathcal A^D*_Q\mathcal E\|_s}{1-\|\mathcal A^D*_Q\mathcal E\|_s}~(\text{or}~\frac{\|\mathcal A^D*_Q\mathcal E\|_s}{1-\|\mathcal E*_Q\mathcal A^D\|_s})~\leq\frac{\kappa\|\mathcal E\|_s/\|\mathcal A\|_s}{1-\kappa\|\mathcal E\|_s/\|\mathcal A\|_s},
    \end{align*} where $\kappa= \|\mathcal{A}\|_s\|\mathcal{A}^D\|_s.$
    
    \begin{proof}
    From Theorem \ref{Thm3.2}, we obtain
    $$\|\mathcal B^D-\mathcal A^D\|_s=\|-\mathcal A^D*_Q\mathcal E*_Q\mathcal B^D\|_s\leq\|\mathcal A^D*_Q\mathcal E\|_s\|\mathcal B^D\|_s,$$
    or
    $$\|\mathcal B^D-\mathcal A^D\|_s=\|-\mathcal B^D*_Q\mathcal E*_Q\mathcal A^D\|_s\leq\|\mathcal B^D*_Q\mathcal E\|_s\|\mathcal A^D\|_s.$$
    Divided by $\|\mathcal A^D\|_s$ on both sides, it directly follows that
    \begin{align}\label{B-A/AUPPER}
     \frac{\|\mathcal B^D-\mathcal A^D\|_s}{\|\mathcal A^D\|_s}\leq\frac{\|\mathcal A^D*_Q\mathcal E\|_s\|\mathcal B^D\|_s}{\|\mathcal A^D\|_s}
     \end{align}
     or
     \begin{align*}
     \frac{\|\mathcal B^D-\mathcal A^D\|_s}{\|\mathcal A^D\|_s}\leq\|\mathcal B^D*_Q\mathcal E\|_s\leq\|\mathcal B^D\|_s\|\mathcal E\|_s.
     \end{align*}
     Furthermore, we observe that 
     \begin{align}\label{BDleft}
     \mathcal B^D=(\mathcal I+\mathcal A^D*_Q\mathcal E)^{-1}*_Q\mathcal A^D\Longleftrightarrow\mathcal A^D=(\mathcal I+\mathcal A^D*_Q\mathcal E)*_Q\mathcal B^D 
     \end{align}
     and 
     \begin{align*}
    \mathcal B^D=\mathcal A^D*_Q(\mathcal I+\mathcal E*_Q\mathcal A^D)^{-1}\Longleftrightarrow\mathcal A^D=\mathcal B^D*_Q(\mathcal I+\mathcal E*_Q\mathcal A^D).
    \end{align*} 
    By \eqref{BDleft} and the condition $\Delta$, we derive the following results, respectively:
    \begin{align}\label{snormbdupper}
        \|\mathcal B^D\|_s&\notag\leq\|(\mathcal I+\mathcal A^D*_Q\mathcal E)^{-1}\|_s\|\mathcal A^D\|_s
        \\&\leq\frac{\|\mathcal A^D\|_s}{1-\|\mathcal A^D*_Q\mathcal E\|_s},
    \end{align}
    and
    \begin{align*}
        \|\mathcal A^D\|_s&\leq\|\mathcal I+\mathcal A^D*_Q\mathcal E\|_s\|\mathcal B^D\|_s.
    \end{align*}
Then we have
\begin{align}\label{snormbdlower}
    \|\mathcal B^D\|_s\geq\frac{\|\mathcal A^D\|_s}{\|\mathcal I+\mathcal A^D*_Q\mathcal E\|_s}.
\end{align} 
Combining \eqref{snormbdupper} and \eqref{snormbdlower}, we get
\begin{align*}
    \frac{\|\mathcal A^D\|_s}{1+\|\mathcal A^D*_Q\mathcal E\|_s}\leq\|\mathcal B^D\|_s\leq\frac{\|\mathcal A^D\|_s}{1-\|\mathcal A^D*_Q\mathcal E\|_s}.
\end{align*}
Moreover, the following inequalities can be verified in a similar manner.
    \begin{align*}
        \frac{\|\mathcal A^D\|_s}{1+\|\mathcal E*_Q\mathcal A^D\|_s}&\leq\|\mathcal B^D\|_s\leq\frac{\|\mathcal A^D\|_s}{1-\|\mathcal E*_Q\mathcal A^D\|_s},
        \\
        \frac{\|\mathcal A^D\|_s}{1+\|\mathcal A^D*_Q\mathcal E\|_s}&\leq\|\mathcal B^D\|_s\leq\frac{\|\mathcal A^D\|_s}{1-\|\mathcal E*_Q\mathcal A^D\|_s},
        \\\frac{\|\mathcal A^D\|_s}{1+\|\mathcal E*_Q\mathcal A^D\|_s}&\leq\|\mathcal B^D\|_s\leq\frac{\|\mathcal A^D\|_s}{1-\|\mathcal A^D*_Q\mathcal E\|_s}.
    \end{align*}
Consequently, inequalities \eqref{B-A/AUPPER} and \eqref{snormbdupper} yield the following result
\begin{align*}
    \frac{\|\mathcal B^D-\mathcal A^D\|_s}{\|\mathcal A^D\|_s}\leq\frac{\|\mathcal A^D*_Q\mathcal E\|_s}{1-\|\mathcal A^D*_Q\mathcal E\|_s}.
\end{align*}
Similarly, we can get another result
\begin{align*}
    \frac{\|\mathcal B^D-\mathcal A^D\|_s}{\|\mathcal A^D\|_s}&\leq\frac{\|\mathcal A^D*_Q\mathcal E\|_s}{1-\|\mathcal E*_Q\mathcal A^D\|_s}.
\end{align*}
Furthermore, we conclude that
        \begin{align*}
             \frac{\|\mathcal B^D-\mathcal A^D\|_s}{\|\mathcal A^D\|_s}\leq \frac{\|\mathcal A^D\|_s\|\mathcal E\|_s}{1-\|\mathcal A^D\|_s\|\mathcal E\|_s}=\frac{\|\mathcal A\|_s\|\mathcal A^D\|_s\|\mathcal E\|_s}{\|\mathcal A\|_s-\|\mathcal A\|_s\|\mathcal A^D\|_s\|\mathcal E\|_s}=\frac{\kappa_{\mathcal D_{QT}}\|\mathcal E\|_s/\|\mathcal A\|_s}{1-\kappa_{\mathcal D_{QT}}\|\mathcal E\|_s/\|\mathcal A\|_s}.
        \end{align*}
        The remaining parts of the proof can be completed analogously.
\end{proof}
\end{corollary}

\section{Numerical example}
In this section, based on the main theorem in Section 4 together with the related corollaries, and in conjunction with Theorem \ref{MPMATLAB}, we present the following numerical example to illustrate the preceding results.
\begin{example}
    Let $\mathcal{A}\in\mathbb{Q}^{3\times 3\times 3}$ be defined by 
\[\mathcal A(:,:,1) = \begin{bmatrix} \ -2.6081+0.811\mathbf{i}-3.0064\mathbf{j}-0.5516\mathbf{k} & -1.2826+0.4267\mathbf{i}-1.2257\mathbf{j}-0.6562\mathbf{k} & 0 \\ 0.8741-1.1228\mathbf{i}+1.8279\mathbf{j}+2.5953\mathbf{k} & 0.9547+2.5177\mathbf{i}+0.5244\mathbf{j}+0.2239\mathbf{k} & 0 \\ 0 & 0 & 0\ \end{bmatrix};\]
     \[\mathcal A(:,:,2) = \begin{bmatrix} \ 1.9585+0.0946\mathbf{i}-0.4876\mathbf{j}-0.3057\mathbf{k} & -0.2933-1.7874\mathbf{i}-2.9651\mathbf{j}-1.4229\mathbf{k} & 0 \\ 0.1321+0.2343\mathbf{i}-0.1597\mathbf{j}-0.8448\mathbf{k} & -0.6288+1.393\mathbf{i}-1.9577\mathbf{j}+1.2823\mathbf{k} & 0 \\ 0 & 0 & 0\ \end{bmatrix};\]
     \[\mathcal A(:,:,3) = \begin{bmatrix} \ 1.3812+0.5786\mathbf{i}-0.821\mathbf{j}-1.0515\mathbf{k} & -0.8832+2.1316\mathbf{i}-0.0311\mathbf{j}+1.3483\mathbf{k} & 0 \\ 0.1775+2.7924\mathbf{i}+1.6295\mathbf{j}+1.2661\mathbf{k} & -2.0328+0.6504\mathbf{i}+2.1573\mathbf{j}-0.6349\mathbf{k} & 0 \\ 0 & 0 & 0\ \end{bmatrix}.\]
Consider a perturbation quaternion tensor $\mathcal{E}\in\mathbb{Q}^{3\times 3\times 3}$ with entries
\[\mathcal E(:,:,1) = \begin{bmatrix} \ 0.2+0.1\mathbf{i}+0.6\mathbf{j}+0.7\mathbf{k} & 0 & 0 \\ 0 & 0.8+0.3\mathbf{i}+0.5\mathbf{j}+0.8\mathbf{k} & 0 \\ 0 & 0 & 0 \end{bmatrix};\]
     \[\mathcal E(:,:,2) = \begin{bmatrix} 0 & 0 & 0 \\ 0 & 0 & 0 \\ 0 & 0 & 0 \end{bmatrix}; \quad
     \mathcal E(:,:,3) = \begin{bmatrix}  0 & 0 & 0 \\ 0 & 0 & 0 \\ 0 & 0 & 0 \end{bmatrix}.\]
We next note that $Ind_{QT}(\mathcal A)=1$. Indeed, after straightforward computations, one verifies that $\mathcal E=\mathcal A*_Q\mathcal A^D*_Q\mathcal E*_Q\mathcal A*_Q\mathcal A^D$ and $\|\mathcal A^D*_Q\mathcal E\|_s=0.4433<1$.
Hence, the assumptions of Theorem \ref{Thm3.2}, Corollary \ref{cor4.5!}, and Corollary \ref{pertu} are simultaneously satisfied. We then proceed to compute
\[\mathcal B^D(:,:,1) = \begin{bmatrix} \ -2.4081+0.911\mathbf{i}-2.4064\mathbf{j}+0.1484\mathbf{k} & -1.2826+0.4267\mathbf{i}-1.2257\mathbf{j}-0.6562\mathbf{k} & 0 \\ 0.8741-1.1228\mathbf{i}+1.8279\mathbf{j}+2.5953\mathbf{k} & 1.7547+2.8177\mathbf{i}+1.0244\mathbf{j}+1.0239\mathbf{k} & 0 \\ 0 & 0 & 0\ \end{bmatrix};\]
     \[\mathcal B^D(:,:,2) = \begin{bmatrix} \ 1.9585+0.0946\mathbf{i}-0.4876\mathbf{j}-0.3057\mathbf{k} & -0.2933-1.7874\mathbf{i}-2.9651\mathbf{j}-1.4229\mathbf{k} & 0 \\ 0.1321+0.2343\mathbf{i}-0.1597\mathbf{j}-0.8448\mathbf{k} & -0.6288+1.393\mathbf{i}-1.9577\mathbf{j}+1.2823\mathbf{k} & 0 \\ 0 & 0 & 0\ \end{bmatrix};\]
     \[\mathcal B^D(:,:,3) = \begin{bmatrix} \ 1.3812+0.5786\mathbf{i}-0.821\mathbf{j}-1.0515\mathbf{k} & -0.8832+2.1316\mathbf{i}-0.0311\mathbf{j}+1.3483\mathbf{k} & 0 \\ 0.1775+2.7924\mathbf{i}+1.6295\mathbf{j}+1.2661\mathbf{k} & -2.0328+0.6504\mathbf{i}+2.1573\mathbf{j}-0.6349\mathbf{k} & 0 \\ 0 & 0 & 0\ \end{bmatrix};\]
and
\[ \|\mathcal A^D\|_s=0.3938;\quad    \|\mathcal B^D\|_s=0.5150;\quad  \|\mathcal B^D-\mathcal A^D\|_s=0.1737.\]
Moreover, further computations yield
\[\frac{\|\mathcal A^D\|_s}{1+\|\mathcal A^D*_Q\mathcal E\|_s}=0.2728;\quad
\frac{\|\mathcal A^D\|_s}{1-\|\mathcal A^D*_Q\mathcal E\|_s}=0.7073;\]

\[\frac{\|\mathcal B^D-\mathcal A^D\|_s}{\|\mathcal A^D\|_s}=0.4412;\quad
\frac{\|\mathcal A^D*_Q\mathcal E\|_s}{1-\|\mathcal A^D*_Q\mathcal E\|_s}=0.7964;\quad
\frac{\kappa\|\mathcal E\|_s/\|\mathcal A\|_s}{1-\kappa\|\mathcal E\|_s/\|\mathcal A\|_s}=1.0047.\]
Therefore, the inequalities stated in Corollary \ref{pertu} are satisfied.
\end{example}

\section*{Funding}
\begin{itemize}
\item Daochang Zhang is supported by the National Natural Science Foundation of China (NSFC) (No. 11901079), China Postdoctoral Science Foundation (No. 2021M700751), and the Scientific and Technological Research Program Foundation of Jilin Province (No. JJKH20190690KJ; No. JJKH20220091KJ; No. JJKH20250851KJ).
\item Dijana Mosi\'c is supported by the Ministry of Science, Technological Development and Innovation, Republic of Serbia, grant number 451-03-34/2026-03/200124.
\end{itemize}

\section*{Conflict of Interest}
The authors declare that they have no potential conflict of interest.

\section*{Data Availability}
Data sharing is not applicable to this article, as no datasets were generated or analyzed during the current study.


\bibliographystyle{abbrv}

\end{document}